\begin{document}


\newtheorem{thm}{Theorem}[section]
\newtheorem{cor}[thm]{Corollary}
\newtheorem{lem}[thm]{Lemma}
\newtheorem{prop}[thm]{Proposition}
\newtheorem{defn}[thm]{Definition}
\newtheorem{rmk}[thm]{Remark}
\newtheorem{exa}[thm]{Example}

\newtheoremstyle{named}{}{}{\itshape}{}{\bfseries}{.}{.5em}{\thmnote{#3}}
\theoremstyle{named}
\newtheorem*{namedtheorem}{}

\newcommand{\nat}{\mathbb{N}}
\newcommand{\reals}{\mathbb{R}}
\newcommand{\integ}{\mathbb{Z}}
\newcommand{\Lor}[1]{\text{Lor}(#1)}
\newcommand{\Euc}[1]{\text{Euc}(#1)}
\newcommand{\mat}[1]{\text{Mat}(#1,\reals)}
\newcommand{\gr}[2]{\text{Gr}_{#1}(#2)}
\newcommand{\SO}[1]{\text{SO}(#1)}
\newcommand{\OO}[1]{\text{O}(#1)}
\newcommand{\GL}[1]{\mathrm{GL}(#1)}
\newcommand{\GLnot}[1]{\mathrm{GL}_0(#1)}
\newcommand{\gau}[1]{\mathrm{Gau}(#1)}
\newcommand{\gaunot}[1]{\mathrm{Gau}_0(#1)}
\newcommand{\aut}[1]{\mathrm{Aut}(#1)}
\renewcommand{\hom}[1]{\mathrm{Hom}(#1)}
\newcommand{\proj}[1]{\mathbb{RP}^{#1}}
\newcommand{\ot}{\leftarrow}
\newcommand{\id}{\text{id}}
\newcommand{\stab}[1]{\text{Stab}(#1)}
\newcommand{\xto}{\xrightarrow}
\newcommand{\sym}[1]{\mathrm{Sym}(#1)}
\newcommand{\met}[2]{\mathrm{Met}_{#1}(#2)}
\newcommand{\norm}[1]{\lVert#1\rVert}
\newcommand{\abs}[1]{\lvert#1\rvert}
\newcommand{\qu}[2]{\mathrm{Qu}_{#1}(#2)}
\newcommand{\map}[1]{\mathrm{Map}(#1)}
\newcommand{\homeo}[1]{\mathrm{Homeo}(#1)}
\newcommand{\nd}{\mathrm{nondeg}}
\newcommand{\splitt}[1]{\mathrm{Split}(#1)}
\newcommand{\fr}[2]{\mathrm{Fr}_{#1}(#2)}
\newcommand{\realproj}[1]{\mathbb{RP}^{#1}}
\newcommand{\zedtwo}{\integ/2\integ}

\title{Sylvester's law of inertia for quadratic forms on vector bundles} \author{Giacomo Dossena}\thanks{Email address: dossena@sissa.it}

\date{}

\begin{abstract}
This paper presents a generalisation of Sylvester's law of inertia to real non-degenerate quadratic forms on a fixed real vector bundle over a connected locally connected paracompact Hausdorff space. By interpreting the classical inertia as a complete discrete invariant for the natural action of the general linear group on quadratic forms, the simplest generalisation consists in substituting such group with the group of gauge transformations of the bundle. Contrary to the classical law of inertia, here the full action and its restriction to the identity path component typically have different orbits, leading to two invariants: a complete invariant for the full action is given by the isomorphism class of the orthonormal frame bundle associated to a quadratic form, while a complete invariant for the restricted action is the homotopy class of any maximal positive-definite subbundle associated to a quadratic form. The latter invariant is finer than the former, which in turn is finer than inertia. Moreover, the orbit structure thus obtained might be used to shed light on the topology of the space of non-degenerate quadratic forms on a vector bundle.
\end{abstract}

\maketitle

\section{Introduction}
Sylvester's law of inertia arose from the efforts to understand real homogeneous quadratic polynomials in $n$ indeterminates. The simplest such polynomial is the sum of $m\leq n$ squares, and the next simplest is a linear combination of $m\leq n$ squares with coefficients taking values in $\{\pm 1\}$. In the first half of the 19th century it was recognised that by a \emph{linear} change of indeterminates (the only kind of change preserving the degree) it is possible to put any homogeneous quadratic polynomial in this second simplest form. Sylvester further observed that the numbers of positive and negative coefficients in the linear combination of squares do not depend on the change of coordinates used for the simplification, and therefore is an intrinsic property of the polynomial, which he dubbed \emph{inertia}\footnote{Because of his poetic disposition, Sylvester indulged in giving imaginative names to many mathematical objects or properties he studied (e.g. matrix, discriminant, invariant, totient, syzygy). Inertia is no exception: in his own words \cite{Syl52},
\begin{quote}
[\dots] a law to which my view of the physical meaning of quantity of matter inclines me, upon the ground of analogy, to give the name of the Law of Inertia for Quadratic Forms, as expressing the fact of the existence of an invariable number inseparably attached to such forms.
\end{quote}}. In the context of abstract algebra such polynomials are identified with quadratic forms on a real vector space of dimension $n$, and Sylvester's law of inertia admits several equivalent formulations which we review in the next section. One of these interprets the inertia as a complete discrete invariant for the natural action of the general linear group on the space of quadratic forms, and it is this formulation which we generalise to non-degenerate quadratic forms on a vector bundle.

Non-degenerate quadratic forms on a vector bundle $E\to X$ appear quite naturally in mathematics and physics, notably in Riemannian geometry and in general relativity where $E$ is the tangent bundle of a manifold. The general linear group $\GL{V}$ of a vector space $V$ can here be substituted by the group $\aut{E}$ of bundle automorphisms, and we further restrict our attention to the subgroup of vertical automorphisms, that is the gauge group $\gau{E}$. The generalisation consists in finding a complete invariant for the action of the gauge group on the space of non-degenerate quadratic forms on the bundle. Contrary to the vector space case, here the full action and its restriction to the identity path component $\gaunot{E}$ have different orbits, thus leading to two different invariants. The main theorem is the following.

\begin{thm}
A complete invariant for the full action of $\gau{E}$ on the space of non-degenerate quadratic forms on $E$ is given by the isomorphism class of the orthonormal frame bundle associated to a quadratic form, while a complete invariant for the restricted action is the homotopy class of any maximal positive-definite vector subbundle associated to a quadratic form.
\end{thm}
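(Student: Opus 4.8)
The plan is to pass from forms to reductions of the full frame bundle and to treat the two claims separately. First I would record that, since $X$ is connected and the signature is a locally constant integer-valued function (by the classical fiberwise law of inertia together with continuity of $q$), every non-degenerate form on $E$ has one fixed signature $(p,r)$ with $p+r=n=\operatorname{rank}E$; fix this throughout. A form $q$ of signature $(p,r)$ is the same datum as a reduction of the principal $\GL{n}$-bundle $\fr{}{E}$ to the subgroup $\OO{p,r}$, namely the sub-bundle $\fr{}{q}$ of $q$-orthonormal frames, and conversely a reduction recovers $q$. Moreover $\gau{E}$ is canonically the gauge group of the principal bundle $\fr{}{E}$, acting on the total space $\GL{n}$-equivariantly, and under this dictionary $g\cdot\fr{}{q}=\fr{}{g\cdot q}$. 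Thus orbits of forms correspond to gauge orbits of reductions.

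For the full action the invariance is immediate: a gauge transformation restricts to an isomorphism $\fr{}{q}\xrightarrow{\sim}\fr{}{g\cdot q}$ of principal $\OO{p,r}$-bundles, so the isomorphism class of $\fr{}{q}$ is constant on $\gau{E}$-orbits. For completeness I would prove the standard reduction lemma: two reductions $R_0,R_1\subseteq\fr{}{E}$ lie in the same $\gau{E}$-orbit if and only if they are isomorphic as principal $\OO{p,r}$-bundles. The nontrivial direction takes an abstract isomorphism $\phi\colon R_0\to R_1$ over $\id_X$ and extends it $\GL{n}$-equivariantly along $R_i\times_{\OO{p,r}}\GL{n}=\fr{}{E}$ to a gauge transformation carrying $R_0$ to $R_1$, hence $q_0$ to $q_1$. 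Composing the two dictionaries yields that the isomorphism class of $\fr{}{q}$ is a complete invariant for the full action; reducing $\OO{p,r}$ to its maximal compact $\OO{p}\times\OO{r}$ rephrases this as the isomorphism class of the pair $(E_+,E_-)$ of definite sub-bundles. This part is essentially formal.

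For the restricted action I would first make the positive sub-bundle invariant canonical. Choosing an auxiliary positive-definite metric $h$, each $q$ equals $h(A_q\,\cdot\,,\cdot)$ for a unique $h$-self-adjoint, invertible endomorphism $A_q$, whose positive-eigenspace bundle $E_+(q)$ is a continuously varying rank-$p$ sub-bundle with $h$-orthogonal complement $E_-(q)$; this defines a map $P\colon\qu{\nd}{E}\to\Gamma(\gr{p}{E})$. Convexity of the space of metrics shows the homotopy class of $P(q)$ is independent of $h$, matching the phrase ``any maximal positive-definite sub-bundle''. Rescaling the eigenvalues of $A_q$ to $\pm1$ by functional calculus fixes $E_+(q)$ and retracts $q$ onto the split form $h|_{E_+(q)}\oplus(-h)|_{E_-(q)}$ through non-degenerate forms of signature $(p,r)$, exhibiting $P$ as a deformation retraction; in particular $P$ induces a bijection between $\pi_0(\qu{\nd}{E})$ and the set of homotopy classes of positive-definite sub-bundles of $E$. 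Necessity of the invariant is then easy: a path from $\id$ to $g$ in $\gau{E}$ produces the path of forms $g_t\cdot q$ and hence a homotopy $E_+(q)\simeq E_+(g\cdot q)$, so forms in one $\gaunot{E}$-orbit have homotopic positive sub-bundles.

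The crux is the converse, where I expect the main obstacle to lie. If $E_+(q)\simeq E_+(q')$, then $(E_+,E_-)$ agree up to isomorphism, so by Part 1 the forms share a single $\gau{E}$-orbit $O$, and by the $\pi_0$-bijection above they lie in one path-component of $\qu{\nd}{E}$. It remains to promote ``same path-component'' to ``same $\gaunot{E}$-orbit'', and for this I would prove the one genuinely analytic ingredient: the orbit map $\operatorname{ev}_{q_0}\colon\gau{E}\to\qu{\nd}{E}$, $g\mapsto g\cdot q_0$, admits local sections, i.e.\ nearby forms in the orbit arise from $q_0$ by an explicit, continuously varying gauge transformation built from a positive $h$-square root (a gauge-group polar decomposition), whose continuity rests on continuity of the functional calculus for positive operators. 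Granting local sections, each orbit is open, hence clopen, so $O$ is a union of path-components and the path above stays inside $O$, and $\operatorname{ev}_{q_0}$ has the path-lifting property. A path in $O$ from $q=g\cdot q_0$ to $q'$ then lifts to a path in $\gau{E}$ from $g$ to some $g''$ with $g''\cdot q_0=q'$ and $g^{-1}g''\in\gaunot{E}$; normality of the identity component $\gaunot{E}$ gives $q'=(g''g^{-1})\cdot q\in\gaunot{E}\cdot q$. Hence the path-components of $O$ are exactly its $\gaunot{E}$-orbits, completing the proof. Establishing the local sections (equivalently, a slice for the gauge action) is the hard, technical step; everything else is formal bundle theory or elementary homotopy.
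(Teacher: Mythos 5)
Your dictionary between forms and $\OO{p,r}$-reductions of the frame bundle handles the full $\gau{E}$-action correctly (it is essentially the paper's frame-bundle proposition combined with the $\gau{E}$-half of Proposition \ref{prop:key}), and both necessity directions are fine. The genuine gap is exactly where you place it, at promoting ``same path component'' to ``same $\gaunot{E}$-orbit'' --- but the route you propose is not merely hard, it fails in the paper's generality. You want local sections of the orbit map $g\mapsto g\cdot q_0$, from which you deduce that orbits are open, hence clopen, and then path-lift. However, $\qu{\nd}{E}$ carries the compact-open topology induced from $\map{E,\reals}$ and $X$ is not assumed compact, and for non-compact $X$ the orbits are typically \emph{not} open. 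Concretely: take $X$ a surface of infinite genus, $E=X\times\reals^2$, inertia $(1,1)$, so orbits are classified by the homotopy class of the positive line subbundle, i.e.\ by the class of its Gauss map in $[X,\realproj{1}]\simeq H^1(X;\integ)$. Any basic compact-open neighbourhood of $q$ constrains $q'$ only over a compact set $K$; altering the winding of the positive line on a handle disjoint from $K$ (and interpolating on a collar through forms with a fixed splitting) produces $q'$ in that neighbourhood lying in a different orbit. So no local section in the sense implying openness can exist; and since, as the theorem itself shows, path components of $\qu{\nd}{E}$ \emph{are} the $\gaunot{E}$-orbits, those path components are likewise non-open, so the clopen argument --- including your claim that the connecting path ``stays inside $O$'' --- is irreparable rather than a missing technicality.

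The paper's proof avoids orbit topology entirely, via two global, constructive ingredients your outline lacks. First, a ``concordance implies isotopy'' statement (Proposition \ref{prop:key}): two \emph{homotopic} splittings are carried one to the other by an element of $\gaunot{E}$, proved by applying the covering-homotopy Lemma \ref{lem:homotopy} to each summand of a splitting of $E\times[0,1]\to X\times[0,1]$ and reading off a continuous path $t\mapsto f_t$ of gauge transformations. Second, Lemma \ref{lem:commonsplitting}: two forms sharing a common splitting differ by the explicit gauge transformation $\sqrt{L}_{q_1q_0}$, which lies in $\gaunot{E}$ because $t\mapsto\sqrt{L}_{q_tq_0}$ along the \emph{convex} path $q_t=tq_1+(1-t)q_0$ of definite forms (applied summandwise in mixed signature) is a path from $\id_E$. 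Your ``gauge-group polar decomposition'' is the same square-root device, but deployed as a putative slice it cannot control behaviour off compact sets, whereas deployed along the convex path of metrics it gives completeness directly, with no openness needed. A secondary caution: your $\pi_0$-bijection identifies path components of the section space of $\gr{p}{E}\to X$ with concordance classes of subbundles, which in the direction you need (a path yields a jointly continuous family over $X\times[0,1]$) uses local compactness of $X$; the paper's concordance-style definition of homotopy of splittings is chosen precisely to sidestep this.
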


This orbit structure might be used in principle to investigate the topology of the space of non-degenerate quadratic forms on $E$, modulo the complication of dealing with non locally compact groups. The enumeration of its path components remains a difficult task (e.g. see \cite{Kos02} for the case of non-degenerate quadratic forms of inertia $(1,n-1)$ on tangent bundles). However, by the above theorem the number of path components in a given $\gau{E}$-orbit is bounded by the number of path components of $\gau{E}/\gaunot{E}$.

\section{Real non-degenerate quadratic forms on vector spaces}
Let $q\colon V\to \reals$ be a real quadratic (possibly degenerate) form on a finite dimensional real vector space $V$, so $q(v)=b(v,v)$ for each $v\in V$ where $b$ is a real symmetric bilinear form on $V$. If we fix a basis $e$ of $V$ we can write $q(v)=x^tax=\sum_{ij} x_ia_{ij}x_j$ where $x$ is the column vector of coordinates of $v$ with respect to $e$, that is $v=\sum_i x_ie_i$, and $a_{ij}=b(e_i,e_j)$ is the symmetric matrix representing $b$. If we change basis, say $e'=eg$ for some $g\in \GL{n}$ where $g$ acts on the right by $e'_i=\sum_j g_{ji}e_j$, the coordinates of $v$ change to $x'=g^{-1}x$ and the symmetric matrix representing $q$ becomes $a'=g^tag$ so the value $q(v)=x^tax=x'^ta'x'$ does not change, which is as it should be. In this setting a common formulation of Sylvester's law of inertia is the following.

\begin{namedtheorem}[Sylvester's law of inertia (\cite{Syl52}, 1852)]
There is a basis of $V$ such that the symmetric matrix representing $q$ is diagonal, and the numbers of positive and negative entries on the diagonal are independent\footnote{The existence of a diagonalising basis was known before 1852 by works of Cauchy, Jacobi, and Borchardt. However, the observation that the numbers of positive and negative entries do not change is due to Sylvester.} of the chosen diagonalising basis.
\end{namedtheorem}

The pair $(n_+,n_-)$ of non-negative integers thus associated to a given quadratic form $q$ is called inertia (or signature, but we prefer the former to honor Sylvester's creative mind). Let us now consider the natural right action of $\GL{V}$ on the set $\qu{}{V}$ of quadratic forms on $V$, as defined by $(fq)(v)=q(fv)$ for each $f\in\GL{V}$. If $q(v)=x^tax$ in a basis $e$ as before, then $(fq)(v)=x^tp^tapx$ where $fe_i=\sum_j p_{ji}e_j$ for $p\in\GL{n}$ uniquely determined by $f$. By interpreting a change of basis as an ``active'' transformation of the elements of $V$, that is an automorphism of $V$, Sylvester's law takes the following equivalent form.

\begin{namedtheorem}[Sylvester's law of inertia (orbit version)]
The inertia is a complete discrete invariant for the $\GL{V}$-action on $\qu{}{V}$. In other words, two quadratic forms on $V$ are in the same $\GL{V}$-orbit if and only if they have the same inertia.
\end{namedtheorem}

In the orbit formulation the quite natural question arises whether the action of $\GL{V}$ can be restricted to a subgroup of $\GL{V}$ without affecting the orbit partition. That such is the case can easily be seen if we fix a basis and identify for a moment quadratic forms and real symmetric matrices\footnote{We might dispense with fixing a basis at the cost of making the argument unnecessarily clumsy.}. Then the action of $\GL{V}$ on quadratic forms becomes the action of $\GL{n}$ on symmetric matrices by congruence and, if we consider the $\GL{n}$-orbit passing through a diagonal matrix $\delta$, we see that we have $g^t\delta g=(hg)^t\delta(hg)$ for each $g\in\GL{n}$, where $h$ is the diagonal matrix with diagonal entries equal to $1$ except one entry which is equal to $-1$. Now, since $\GL{n}$ has only two connected components and since $g$ and $hg$ never lie in the same component, we conclude that the full $\GL{n}$-action shares the orbits with its restriction to the identity component $\GLnot{n}$. We record this fact in the following proposition.

\begin{prop}
The $\GL{V}$-orbits coincide with the $\GLnot{V}$-orbits.
\end{prop}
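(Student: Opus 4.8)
The plan is to prove that each $\GL{V}$-orbit is already a single $\GLnot{V}$-orbit. One inclusion is immediate: since $\GLnot{V}$ is a subgroup of $\GL{V}$, every $\GLnot{V}$-orbit is contained in a $\GL{V}$-orbit. For the converse I would use that $\GL{V}$ has exactly two path components, distinguished by the sign of the determinant, with $\GLnot{V}$ the component of positive determinant. Fixing $q\in\qu{}{V}$, it then suffices to show that $fq$ lies in the $\GLnot{V}$-orbit of $q$ for every $f\in\GL{V}$; the only case needing argument is $\det f<0$.

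The key step is to exhibit an element $h\in\stab{q}$ with $\det h<0$, where $\stab{q}$ denotes the stabilizer for the action $(fq)(v)=q(fv)$. Granting this, put $f'=h^{-1}f$. Then $\det f'=\det f/\det h>0$, so $f'\in\GLnot{V}$, while $f'f^{-1}=h^{-1}\in\stab{q}$ forces $f'q=fq$ (two elements of $\GL{V}$ send $q$ to the same form precisely when they differ by a stabilizer element on the appropriate side). Hence $fq$ lies in the $\GLnot{V}$-orbit of $q$, and combined with the trivial inclusion this gives the proposition.

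It remains to construct $h$. I would fix a basis and pass to matrices, where the action becomes congruence $a\mapsto p^tap$. By the diagonalizing half of Sylvester's law there is $g\in\GL{n}$ with $g^tag=\delta$ diagonal. The sign flip $k=\mathrm{diag}(1,\dots,1,-1)$ satisfies $k^t\delta k=\delta$ whatever the entries of $\delta$, so $h=gkg^{-1}$ satisfies $h^tah=(g^{-1})^tk^t\delta k g^{-1}=(g^{-1})^t\delta g^{-1}=a$ and $\det h=-1$, as needed. I do not expect a genuine obstacle here; the only point to watch is that such a negative-determinant isometry must exist for \emph{every} form, degenerate ones included, and the sign-flip-after-diagonalization produces it uniformly since $k^t\delta k=\delta$ holds for an arbitrary diagonal $\delta$.
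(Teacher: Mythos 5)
Your proof is correct and is essentially the paper's own argument: both fix a basis, diagonalize via Sylvester, and exploit the sign-flip matrix $k$ satisfying $k^t\delta k=\delta$ for arbitrary diagonal $\delta$ (degenerate included), together with the fact that $\GL{n}$ has exactly two components separated by the sign of the determinant. The paper phrases it as $g^t\delta g=(hg)^t\delta(hg)$ with $g$ and $hg$ in different components, whereas you conjugate the flip into a negative-determinant element of $\stab{q}$ and factor $f$ through it --- the same idea in stabilizer language.
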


In the next section we shall see how this coincidence disappears in the vector bundle generalisation, where the two actions typically have different orbit partitions.

There is yet another version of Sylvester's law which we shall need. We omit the proof of equivalence because it is elementary.

\begin{namedtheorem}[Sylvester's law of inertia (splitting version)]
For each quadratic form $q\in\qu{}{V}$ there is a (non-canonical) splitting $V=V_+\oplus V_-\oplus V_0$ such that $q$ restricted to each direct summand is respectively positive definite, negative definite, zero. Even though there are infinitely many such splittings, their dimensions depend only upon $q$. Indeed, $(\dim V_+,\dim V_-)$ is the inertia of $q$.
\end{namedtheorem}

The natural appearance of the connected components of $\GL{V}$ suggests to study the action topologically. Evidently $\GL{V}$ has been tacitly assumed to be endowed with the subspace topology induced by the inclusion in $\hom{V}$, where the latter is endowed with its natural Hausdorff topology making it toplinearly isomorphic to $\reals^{n^2}$. If $V$ and $\qu{}{V}$ are also endowed with their natural Hausdorff topologies, then the right actions $\GL{V}\times V\to V$ and $\GL{V}\times\qu{}{V}\to\qu{}{V}$ turn out to be continuous.

In order to simplify the approach, hereafter we restrict our attention to the case of \emph{non-degenerate} quadratic forms. The main reason is that in the vector bundle setting a form which is degenerate at one fibre might change its inertia in a neighbourhood of the fibre, thus requiring a careful study of singularities. Then, hereafter any inertia $(n_+,n_-)$ will implicitly contain the assumption $n_++n_-=n=\dim V$. We denote by $\qu{n_+,n_-}{V}$ the orbit corresponding to inertia $(n_+,n_-)$. We have the following classical result (we include the proof for the sake of completeness).

\begin{prop}\label{prop:open}
Each $\qu{n_+,n_-}{V}$ is open in $\qu{}{V}$.
\end{prop}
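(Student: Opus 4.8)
The plan is to exhibit, around an arbitrary $q_0\in\qu{n_+,n_-}{V}$, an explicit neighbourhood in $\qu{}{V}$ on which the inertia is constant. After fixing a basis I identify $\qu{}{V}$ with the space $\sym{n}$ of real symmetric $n\times n$ matrices, carrying its standard topology as a finite-dimensional real vector space; the value map $q\mapsto q(v)$ and all restriction maps $q\mapsto q|_W$ to a fixed subspace $W\subseteq V$ are then continuous.

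First I would invoke the splitting version of Sylvester's law to write $V=V_+\oplus V_-$ with $q_0|_{V_+}$ positive definite, $q_0|_{V_-}$ negative definite, $\dim V_\pm=n_\pm$ and $n_++n_-=n$ (there is no $V_0$, since $q_0$ is non-degenerate). The crucial observation is that positive-definiteness is an open condition. Indeed, on the unit sphere $S_+$ of $V_+$ (compact for any choice of norm) the continuous function $v\mapsto q_0(v)$ attains a positive minimum, so for all $q$ in a suitable neighbourhood $U_+$ of $q_0$ the restriction $q|_{V_+}$ is still positive on $S_+$, hence positive definite. Symmetrically I obtain a neighbourhood $U_-$ on which $q|_{V_-}$ stays negative definite, and I set $U=U_+\cap U_-$.

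It remains to check that every $q\in U$ again has inertia $(n_+,n_-)$. Since $q|_{V_+}$ is positive definite, $V_+$ is a positive-definite subspace for $q$, whence $n_+(q)\ge\dim V_+=n_+$; likewise $n_-(q)\ge n_-$. As the three indices of any inertia satisfy $n_+(q)+n_-(q)+n_0(q)=n=n_++n_-$, these two inequalities force $n_0(q)=0$ together with $n_+(q)=n_+$ and $n_-(q)=n_-$. Thus $U\subseteq\qu{n_+,n_-}{V}$, and the orbit is open.

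I expect the only genuinely substantive step to be the inequality $n_+(q)\ge\dim V_+$, i.e. the fact that the first inertia index bounds the dimension of every positive-definite subspace; this is itself a repackaging of Sylvester's law (the maximal dimension of a positive-definite subspace equals $n_+$) and can be extracted from the splitting version. Everything else reduces to the compactness argument for openness of definiteness, which is routine and, pleasantly, is exactly the ingredient one expects to survive into the vector-bundle setting.
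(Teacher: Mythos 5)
Your proof is correct and takes essentially the same route as the paper's: decompose $V=V_+\oplus V_-$ via the splitting version, use compactness of the unit spheres in $V_\pm$ to show that definiteness of the restrictions persists in a neighbourhood, and conclude that the inertia is unchanged. The only differences are cosmetic: the paper works with the explicit norm $\norm{q}=\sup_{\norm{v}=1}\abs{q(v)}$ and the quantitative bound $\frac{1}{2}\min(r_+,r_-)$ where you invoke ``a suitable neighbourhood,'' while you spell out the final counting step ($n_+(q)\geq n_+$ and $n_-(q)\geq n_-$ forcing equality) that the paper leaves implicit in its concluding ``hence $q'\in\qu{n_+,n_-}{V}$.''
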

\begin{proof}
Take $q\in\qu{n_+,n_-}{V}$ and consider a decomposition $V=V_+\oplus V_-$ into a positive definite $n_+$-plane $V_+$ and a negative definite $n_-$-plane $V_-$. Now put an arbitrary norm $\norm{\cdot}$ on $V$. This induces the norm $\norm{q}:=\sup_{\norm{v}=1}\abs{q(v)}$ on $\qu{}{V}$. Since the unit sphere in $V_+$ is compact, the continuous map $v\mapsto q(v)$ restricted to this sphere attains a minimum $r_+>0$, i.e. $q(v)\geq r_+$ for each $v$ in the unit sphere in $V_+$. For each $v\in V_+$ we can write $q(v/\norm{v})\geq r_+$, that is $q(v)\geq r_+\norm{v}^2$. Analogously we find a maximum $-r_-<0$ on the unit sphere in $V_-$, therefore for each $w\in V_-$ we get $q(w)\leq -r_-\norm{w}^2$. Now, for any $q'\in\qu{}{V}$ such that $\norm{q-q'}\leq\frac{1}{2}\min(r_+,r_-)$ we have $q'(v)\geq \frac{1}{2}r_+\norm{v}^2$ for each $v\in V_+$, and $q'(w)\leq -\frac{1}{2}r_-\norm{w}^2$ for each $w\in V_-$, hence $q'\in\qu{n_+,n_-}{V}$. This shows that $\qu{n_+,n_-}{V}$ is open in $\qu{}{V}$.
\end{proof}

\section{Real non-degenerate quadratic forms on vector bundles}
We now generalise the above treatment to non-degenerate quadratic forms on vector bundles. Let $\pi\colon E\to X$ (also denoted simply by $E\to X$ or just $E$) be a locally trivial real vector bundle of rank $n$ on a connected locally connected paracompact Hausdorff topological space $X$. A non-degenerate quadratic form on $E$ is a continuous function $q\colon E\to \reals$ such that for each $x\in X$ the restriction $q_x=q|_{E_x}\colon E_x=\pi^{-1}(x)\to \reals$ is a non-degenerate quadratic form on $E_x$.

By Proposition \ref{prop:open} and the connectedness assumption on $X$ the inertia of a non-degenerate quadratic form on $E$ is constant. As already anticipated, this is not true in general if we drop the assumption of non-degeneracy, making the corresponding theory much more complicated. The set $\qu{\nd}{E}$ of non-degenerate quadratic forms on $E$ is thus the disjoint union of subsets $\qu{n_+,n_-}{E}$ of quadratic forms on $E$ with inertia $(n_+,n_-)$, where $(n_+, n_-)$ runs over all ordered pairs of non-negative integers summing to $n$.

The group $\GL{V}$ is now substituted by the group $\gau{E}$ of bundle automorphisms covering the identity, that is all homeomorphisms $\phi\colon E\to E$ such that $\pi\circ\phi=\pi$ and such that they restrict to linear isomorphisms on each fibre.

For topological spaces $Y$ and $Z$ we denote by $\map{Y,Z}$ the set of continuous maps from $Y$ to $Z$ endowed with the compact-open topology. We then consider $\gau{E}$ and $\qu{\nd}{E}$ as endowed with the subspace topologies induced by the inclusions in $\map{E,E}$ and $\map{E,\reals}$ respectively. Since $E$ is locally compact and locally connected\footnote{This follows from the assumption of local connectedness of $X$.}, by a result of Arens \cite{Are46} $\homeo{E}\subset\map{E,E}$ is a topological group and so is the subgroup $\gau{E}\subset\homeo{E}$ with the subspace topology. Moreover, the natural action $\gau{E}\times \qu{\nd}{E}\to\qu{\nd}{E}$ is continuous because it comes from restricting to $\gau{E}\times \qu{\nd}{E}$ the natural assignment $\map{E,E}\times\map{E,\reals}\to\map{E,\reals}$ given by composition of functions. We denote by $\gaunot{E}$ the path component of the identity. By the general theory of topological groups $\gaunot{E}$ is a normal subgroup of $\gau{E}$, hence $\gau{E}/\gaunot{E}$ is a topological group as well.

As suggested by the splitting version of Sylvester's law, we shall need to consider the set of all splittings of $E$ and topologise it. For us a splitting of a vector bundle $E\to X$ is an ordered pair $(E',E'')$ of vector subbundles of $E$ such that $E=E'\oplus E''$, where $\oplus$ denotes the Whitney sum. For a finite dimensional vector space $V$ we denote by $\gr{p}{V}$ the Grassmannian of $p$-planes in $V$, endowed with its standard topology. Given a vector bundle $E\to X$, we denote by $\gr{p}{E}\to X$ the topological fibre bundle whose fibre at $x\in X$ is the Grassmannian $\gr{p}{E_x}$. The space $E^p$ of all rank $p$ subbundles of $E$ can then be defined as the set of continuous sections of $\gr{p}{E}\to X$ endowed with the subspace topology induced by the inclusion in $\map{X,\gr{p}{E}}$. With these definitions the natural action $\gau{E}\times E^p\to E^p$ is continuous, as can be seen by trivialising $E$ over some open covering of $X$. Finally, the set $\splitt{E}$ of all splittings of $E$ is topologised as a subspace of $\sqcup_{j=0}^n E^j\times E^{n-j}$ and the natural action $\gau{E}\times \splitt{E}\to \splitt{E}$ is continuous as well. Notice that every vector bundle has a unique rank $0$ subbundle given by the zero section, denoted by $0$, and a unique rank $n$ subbundle given by $E$ itself, so that we always have the trivial splittings $(E, 0)$ and $(0, E)$.

We now define two equivalence relations on $\splitt{E}$.

\begin{defn}\label{defn:splittings}
Two splittings $E=E'_0\oplus E''_0=E'_1\oplus E''_1$ of a vector bundle $\pi\colon E\to X$ are called isomorphic, and we write $(E'_0,E''_0)\simeq (E'_1,E''_1)$, if there are vector bundle isomorphisms $f'\colon E'_0\to E'_1$ and $f''\colon E''_0\to E''_1$. They are called homotopic, and we write $(E'_0,E''_0)\sim (E'_1,E''_1)$, if there is a splitting $(S',S'')$ of $\pi\times \id\colon E\times[0,1]\to X\times[0,1]$ such that $S' |_{X\times\{i\}}=E'_i$ and $S'' |_{X\times\{i\}}=E''_i$ for $i=0,1$.
\end{defn}

Obviously the ranks of two isomorphic or homotopic splittings are equal.

We can characterise isomorphic or homotopic splittings in terms of the action of $\gau{E}$ or $\gaunot{E}$ accordingly. First we recall the following fundamental lemma.

\begin{lem}\label{lem:homotopy}
Let $E\to X\times[0,1]$ be a vector bundle with $X$ paracompact. There is a vector bundle isomorphism $E\to E |_{X\times\{0\}}\times [0,1]$ which restricts to the identity on $E |_{X\times\{0\}}$.
\end{lem}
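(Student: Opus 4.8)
The plan is to prove the classical homotopy-invariance statement that a vector bundle over $X\times[0,1]$ is pulled back from the base slice, following the approach of Atiyah and Husemoller. Write $p\colon X\times[0,1]\to X$ for the projection and $E_0:=E|_{X\times\{0\}}$, so that the target bundle $E_0\times[0,1]$ is exactly the pullback $p^*E_0$, whose fibre over $(x,t)$ is $(E_0)_x=E_{(x,0)}$. Under this identification, an isomorphism restricting to the identity over $X\times\{0\}$ is the same datum as a continuous family of linear isomorphisms $T_{(x,t)}\colon E_{(x,0)}\to E_{(x,t)}$ with $T_{(x,0)}=\id$. Constructing such a family of ``vertical transports'' is the goal.

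First I would establish the local statement that each point of $X$ has an open neighbourhood $U$ over which $E|_{U\times[0,1]}$ is trivial. Fixing $x$, I would cover the compact segment $\{x\}\times[0,1]$ by finitely many product neighbourhoods on which $E$ trivialises; intersecting their $X$-factors yields an open $U\ni x$ and a partition $0=s_0<\dots<s_m=1$ with $E$ trivial over each $U\times[s_{i-1},s_i]$. These trivialisations can be spliced inductively: over $U\times\{s_i\}$ two consecutive ones differ by a map $U\to\GL{n}$, and correcting the later trivialisation by this transition extended constantly in $t$ makes them agree, producing a trivialisation over all of $U\times[0,1]$.

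To globalise I would invoke paracompactness to replace the $U$'s by a countable open cover $\{V_k\}_{k\geq 1}$ with $E|_{V_k\times[0,1]}$ trivial, equipped with a locally finite subordinate partition of unity $\{\varphi_k\}$. Each trivialisation provides canonical transports $c^k_{(x;a,b)}\colon E_{(x,a)}\to E_{(x,b)}$ for $x\in V_k$, satisfying $c^k_{(x;b,c)}\circ c^k_{(x;a,b)}=c^k_{(x;a,c)}$ and $c^k_{(x;a,a)}=\id$. Since isomorphisms cannot be averaged, I would blend these by transporting from height $0$ up to height $t$ while switching charts at intermediate heights spaced by the weights $\varphi_k(x)$: fixing the global ordering of the indices, over a point where $\varphi_{k_1},\dots,\varphi_{k_r}$ are the (finitely many, by local finiteness) nonzero ones, set $\theta_0=0$ and $\theta_j=\theta_{j-1}+\varphi_{k_j}(x)\,t$, and define $T_{(x,t)}:=c^{k_r}_{(x;\theta_{r-1},\theta_r)}\circ\dots\circ c^{k_1}_{(x;\theta_0,\theta_1)}$. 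Since the $\varphi_k$ sum to $1$ we have $\theta_r=t$, so this ends at the correct height; at $t=0$ every factor is the identity, whence $T_{(x,0)}=\id$; and the assignment is continuous in $(x,t)$ because a chart's sub-interval opens from, or closes to, a single point exactly as its bump function switches on or off. The family $\{T_{(x,t)}\}$ is then the sought isomorphism $p^*E_0\to E$, whose inverse is the stated map $E\to E_0\times[0,1]$ equal to the identity over $X\times\{0\}$.

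The hard part will be this globalisation step, not the local triviality. The obstruction is that one cannot patch the local transports by a partition of unity in the naive way, since a convex combination of linear isomorphisms need not be invertible; this is precisely why the construction must proceed by \emph{composing} chart-to-chart transports rather than adding them. The two points requiring care are the continuity of the composite at the heights where a chart switches on or off, and the verification that every factor—and hence $T$—restricts to the identity at $t=0$. Paracompactness enters exactly to furnish the countable, locally finite subordinate partition of unity on which this sequential, locally finite product is well defined.
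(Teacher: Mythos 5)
Your proof is correct, but note that the paper itself does not prove this lemma: its ``proof'' is a citation to Husemoller (Theorem 4.3 and Corollary 4.5, pp.~29--30), so your proposal supplies an argument where the paper outsources one. Measured against that standard textbook proof, your local step (compactness of $[0,1]$, product neighbourhoods, and inductive splicing of trivialisations along the slices $U\times\{s_i\}$ via transition maps extended constantly in $t$) is identical; your globalisation is a mild but genuine variant. Husemoller (and Hatcher) compose a locally finite sequence of bundle isomorphisms that slide the graph of the partial sums $\psi_k=\varphi_1+\cdots+\varphi_k$ upward, which most directly yields $E|_{X\times\{0\}}\simeq E|_{X\times\{1\}}$; you instead build, for every $(x,t)$ simultaneously, a transport $T_{(x,t)}\colon E_{(x,0)}\to E_{(x,t)}$ by composing chart-to-chart transports through heights spaced proportionally to $\varphi_{k_j}(x)\,t$. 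The mechanism is the same in both cases --- a locally finite \emph{composition} of chart-induced morphisms, forced by your correct observation that convex combinations of isomorphisms need not be invertible --- but your version has the small advantage of producing the isomorphism $E\simeq E|_{X\times\{0\}}\times[0,1]$ restricting to the identity over $X\times\{0\}$ in one stroke, which is literally the statement needed (and needed in this strong relative form in the paper's Proposition 3.4). Two steps you assert without proof are standard and citable: the reduction to a countable trivialising cover is the usual grouping trick (e.g.\ Milnor--Stasheff, Lemma 5.9) --- in fact you could avoid it entirely by well-ordering an arbitrary locally finite subordinate partition of unity, since near any point only finitely many factors of your composite differ from the identity; and continuity of the inverse follows from the standard fact that a bundle morphism over $\id_X$ which is a fibrewise isomorphism is a bundle isomorphism, because in local trivialisations it is a continuous map into $\GL{n}$ and matrix inversion is continuous. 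Your continuity check at chart switch-on/off is the right point to worry about and is sound: on $V_k\setminus\mathrm{supp}(\varphi_k)$ the $k$-th factor is transport over a degenerate interval, hence the identity, so the two open descriptions of that factor agree on the overlap and paste, while local finiteness fixes the (finite) list of factors on a neighbourhood of each point.
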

\begin{proof}
E.g. see Husemoller \cite{Hus94} Theorem 4.3 and Corollary 4.5 p.29-30, or any textbook on fibre bundles.
\end{proof}

\begin{prop}\label{prop:key}
Two splittings of $E$ are isomorphic if and only if they lie in the same $\gau{E}$-orbit. They are homotopic if and only if they lie in the same $\gaunot{E}$-orbit.
\end{prop}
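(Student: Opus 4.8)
The plan is to prove the two equivalences separately, and the key observation underlying both is that a gauge transformation of $E\times[0,1]$ covering the identity of $X\times[0,1]$ is the same data as a continuous path in $\gau{E}$. Indeed, such a transformation $\beta$ preserves the $[0,1]$-coordinate, so it has the form $\beta(e,t)=(\beta_t(e),t)$ with each $\beta_t\in\gau{E}$; since $E$ is locally compact Hausdorff, the exponential law for the compact-open topology turns the continuity of $\beta$ into that of $t\mapsto\beta_t$ and conversely. In particular, if one requires $\beta_0=\id$, then the endpoint $\beta_1$ ranges exactly over $\gaunot{E}$ as $\beta$ ranges over the gauge transformations of $E\times[0,1]$ restricting to the identity at $t=0$. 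I will use this dictionary repeatedly, together with the easy fact that the image of a splitting under a gauge transformation is again a splitting.

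For the first equivalence, if $(E'_1,E''_1)=\phi\cdot(E'_0,E''_0)$ for some $\phi\in\gau{E}$, then $\phi$ restricts to vector bundle isomorphisms $E'_0\to E'_1$ and $E''_0\to E''_1$, so the splittings are isomorphic. Conversely, given isomorphisms $f'\colon E'_0\to E'_1$ and $f''\colon E''_0\to E''_1$, I would glue them into a single $\phi\in\gau{E}$ by setting $\phi=f'\circ p'+f''\circ p''$, where $p',p''$ are the continuous bundle projections of $E=E'_0\oplus E''_0$ onto its summands (and $f',f''$ are read as maps into $E'_1,E''_1\subseteq E$). This $\phi$ is fibrewise a linear isomorphism, covers the identity, is continuous as a sum of composites of continuous bundle maps, and has a continuous inverse built analogously from $f'^{-1},f''^{-1}$ and the projections of $E=E'_1\oplus E''_1$. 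By construction $\phi(E'_0)=E'_1$ and $\phi(E''_0)=E''_1$, so the two splittings share a $\gau{E}$-orbit.

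For the second equivalence, suppose first $(E'_1,E''_1)=\phi\cdot(E'_0,E''_0)$ with $\phi\in\gaunot{E}$. Choosing a path from $\id$ to $\phi$ and applying the dictionary, I obtain $\beta\in\gau{E\times[0,1]}$ with $\beta_0=\id$ and $\beta_1=\phi$; transporting the product splitting, the pair $(S',S''):=\beta\cdot(E'_0\times[0,1],\,E''_0\times[0,1])$ is a splitting of $E\times[0,1]$ restricting to $(E'_0,E''_0)$ at $t=0$ and to $(\phi(E'_0),\phi(E''_0))=(E'_1,E''_1)$ at $t=1$, i.e.\ a homotopy. Conversely, given a homotopy $(S',S'')$ of $E\times[0,1]$, I apply Lemma \ref{lem:homotopy} to each summand to get isomorphisms $\alpha'\colon S'\to E'_0\times[0,1]$ and $\alpha''\colon S''\to E''_0\times[0,1]$ restricting to the identity at $t=0$; since $S'\oplus S''=E\times[0,1]=(E'_0\oplus E''_0)\times[0,1]$, their Whitney sum $\alpha=\alpha'\oplus\alpha''$ is a self-map, indeed a gauge transformation, of $E\times[0,1]$ with $\alpha_0=\id$. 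By the dictionary $\alpha_1\in\gaunot{E}$, and because $\alpha$ carries $S'$ into $E'_0\times[0,1]$ and $S''$ into $E''_0\times[0,1]$, restriction to $t=1$ gives $\alpha_1(E'_1)=E'_0$ and $\alpha_1(E''_1)=E''_0$; hence $\alpha_1^{-1}\in\gaunot{E}$ sends $(E'_0,E''_0)$ to $(E'_1,E''_1)$, as desired.

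The routine but essential points are the continuity claims: that the glued map $\phi$ and its inverse are continuous in the first equivalence, and, more delicately, that restriction-to-slices and its inverse are continuous in the dictionary. The latter is exactly where local compactness of $E$ enters, since the direction ``path in $\gau{E}$ yields a gauge transformation of $E\times[0,1]$'' relies on the exponential law, whereas the opposite direction is formal. I expect the main obstacle to be the converse half of the homotopy statement: one must verify that the two straightening isomorphisms supplied by Lemma \ref{lem:homotopy} genuinely assemble into an element of $\gau{E\times[0,1]}$ — a self-map of $E\times[0,1]$ rather than merely a map to an abstractly isomorphic bundle — that really fixes the $t=0$ slice, so that the extracted endpoint indeed lies in $\gaunot{E}$.
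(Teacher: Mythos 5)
Your proof is correct and takes essentially the same route as the paper's: the same Whitney-sum gluing $f'\oplus f''$ for the isomorphism statement, and for the homotopy statement the same application of Lemma \ref{lem:homotopy} to the two summands of the splitting of $E\times[0,1]$, followed by extracting a path $t\mapsto\alpha_t$ in $\gau{E}$ via the compact-open/exponential-law argument the paper also uses. Your ``dictionary'' merely makes both directions of that currying explicit (correctly locating where local compactness of $E$ is needed), and anchoring the straightening at $t=0$ rather than $t=1$ is an immaterial variation.
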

\begin{proof}
The first statement is easy and does not use the previous lemma. If $(f',f'')$ is an isomorphism between splittings $(E'_0,E''_0)$ and $(E'_1,E''_1)$, then $f=f'\oplus f''$ lies in $\gau{E}$. Conversely, $f\in\gau{E}$ acting on a splitting $(E',E'')$ produces the splitting $(f(E'),f(E''))$ which is isomorphic to $(E',E'')$ by means of $(f',f'')$, where $f'=f |_{E'}$ and $f''=f |_{E''}$. Now we prove the second statement. One direction is immediate: if $f\in\gaunot{E}$ is as in the statement, then choose any path $t\mapsto f_t\in\gaunot{E}$ from $\id_E$ to $f$ and consider the 1-parameter family of splittings $(f_t(E'_0), f_t(E''_0))$ of $E$. This family defines a splitting of $E\times[0,1]\to X\times[0,1]$ which restricts to $(E'_i,E''_i)$ on $X\times\{i\}$, $i=0,1$. Conversely, let $(S',S'')$ be a splitting of $E\times[0,1]\to X\times[0,1]$ which restricts to $(E'_i,E''_i)$ on $X\times\{i\}$, $i=0,1$. Apply lemma \ref{lem:homotopy} to $S'$ and $S''$ separately, obtaining isomorphisms $f'\colon S'\to S' |_{X\times\{1\}}\times [0,1]$ and $f''\colon S''\to S'' |_{X\times\{1\}}\times [0,1]$. Since $S'\oplus S''=E\times[0,1]$ we obtain an isomorphism $f=f'\oplus f''\colon E\times[0,1]\to E\times[0,1]$. Clearly for $(u,t)\in E\times[0,1]$ we have $f(u,t)=(f_t(u),t)$ where the partial map $E\times[0,1]\to E$, $(u,t)\mapsto f_t(u)$ is continuous and such that $f_t\in\gau{E}$ for each $t\in[0,1]$. Moreover by construction we see that $f_0=\id_E$, $f_1(E'_0)=E'_1$ and $f_1(E''_0)=E''_1$. It remains to prove that $t\mapsto f_t$ is continuous. To this end we can view $t\mapsto f_t$ as a map $[0,1]\to \map{E,E}$ associated to $E\times [0,1]\to E$, $(u,t)\mapsto f_t(u)$, so that continuity of $t\mapsto f_t$ is ensured by general properties of the compact-open topology. A more direct proof is as follows. Assume $t\mapsto f_t$ is not continuous at some point, say $t_\star\in[0,1]$. This implies we can find a sequence $t_n\to t_\star$ and a sequence $u_n\in K$ such that $f_{t_n}(u_n)\in E\setminus U$, where $K\subset E$ is compact, $U\subset E$ is open and $f_{t_\star}(K)\subset U$. The continuity of $E\times [0,1]\to E$, $(u,t)\mapsto f_t(u)$ implies $\lim_n f_{t_n}(u_n) = f_{t_\star}(u_\star)$ where $u_\star=\lim_n u_n\in K$. Since $E\setminus U$ is closed, then $f_{t_\star}(u_\star)\in E\setminus U$ and this contradicts the fact that $f_{t_\star}(K)\subset U$.
\end{proof}

We note in passing that the second statement of the above proposition can be viewed as a ``concordance implies isotopy'' result for continuous vector subbundles.

When $X$ is a smooth manifold there is a more explicit construction of the path $t\mapsto f_t$ of automorphisms carrying one splitting to the other. The outline is as follows. The smooth structure on $X$ induces smooth structures on $S'$ and $S''$ (e.g. see \cite{Hir76} Theorem 3.5 p.101) making it possible to choose arbitrary linear connections $\nabla'$ and $\nabla''$ on them. The connection $\nabla=\nabla'\oplus\nabla''$ on $S'\oplus S''=E\times [0,1]$ allows us to parallel transport any $u=u'\oplus u''\in E\times \{0\}$ along the straight path $t\mapsto (\pi(u),t)\in X\times [0,1]$ obtaining a vector $u_t=u'_t\oplus u''_t$. Finally, the map $f_t(u)= u_t$ defines a continuous path of automorphisms of $E$ with the desired properties.

\begin{rmk}\label{rmk:splittings}
Evidently two homotopic splittings are isomorphic. The converse does not hold, as the following example shows.
\end{rmk}

\begin{exa}
Given the trivial rank 2 vector bundle $S^1\times \reals^2\to S^1$ and a natural number $k\in\nat$, consider the line subbundle $l_k$ defined by its corresponding Gauss map $\Phi_k\colon S^1\to\realproj{1}$, $\alpha\mapsto [\cos \frac{k\alpha}{2}: \sin \frac{k\alpha}{2}]$ where $[x:y]$ are homogeneous coordinates in $\realproj{1}$. In other words, $\Phi_k(x)$ is a line through the origin in $\reals^2$ performing $k$ half turns around the origin while $x$ moves around $S^1$. Notice that $k\neq k'$ implies that $\Phi_k$ and $\Phi_{k'}$ correspond to different elements in $\pi_1(\realproj{1})\simeq \pi_1(S^1)\simeq\integ$. Also notice that $l_k$ is trivial if and only if $k$ is even, a nowhere vanishing section being given by $\alpha\mapsto (\cos \frac{k\alpha}{2}, \sin \frac{k\alpha}{2})$. By the well-known fact that on $S^1$ there are only two isomorphism classes of line bundles, one represented by the trivial bundle and the other one represented by the M\"obius bundle, we deduce that $l_k\simeq l_{k'}$ if and only if $k=k'\mod 2$. However, for $k\neq k'$ the line bundles $l_k$ and $l_{k'}$ are not homotopic. Indeed, assume there is a line subbundle $l$ of the trivial rank 2 vector bundle over $S^1\times[0,1]$ such that $l |_{S^1\times\{0\}}=l_k$ and $l |_{S^1\times\{1\}}=l_{k'}$. Then the corresponding Gauss map $\Phi_l\colon S^1\times [0,1]\to \realproj{1}$ determines a homotopy between $\Phi_k$ and $\Phi_{k'}$ as elements of $\pi_1(\realproj{1})$, which implies $k=k'$ as we observed before. Another way to interpret this example is the following: the tautological line bundle $\gamma_1(\reals^3)\to \realproj{2}$ is universal for 1-dimensional manifolds, and the line bundles $l_{2k}$ and $l_{2k+1}$ correspond to the two elements in $\pi_1(\realproj{2})\simeq\zedtwo$. On the other hand, the tautological line bundle $\gamma_1(\reals^2)\to \realproj{1}$ is universal only for 0-dimensional manifolds.
\end{exa}

We now briefly review some basic constructions we shall need later on. We start with a small lemma for quadratic forms on a vector space $V$.

\begin{lem}\label{lem:simul}
Given $q\in\qu{\nd}{V}$ and $q'\in\qu{n,0}{V}$, there is a basis of $V$ that simultaneously diagonalises $q$ and $q'$.
\end{lem}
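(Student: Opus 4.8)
The plan is to use the positive definite form $q'$ as an inner product and thereby reduce simultaneous diagonalisation to the spectral theorem for a single self-adjoint operator. Write $b$ and $b'$ for the polar bilinear forms of $q$ and $q'$ respectively. Since $q'\in\qu{n,0}{V}$ is positive definite, $b'$ is an inner product on $V$; in particular it is non-degenerate, so for each $v\in V$ the linear functional $w\mapsto b(v,w)$ is represented by a unique vector $Av\in V$ characterised by $b(v,w)=b'(Av,w)$ for all $w\in V$. Bilinearity of $b$ and $b'$ makes $v\mapsto Av$ linear, and the symmetry of $b$ gives $b'(Av,w)=b(v,w)=b(w,v)=b'(Aw,v)=b'(v,Aw)$, so $A$ is self-adjoint with respect to $b'$.

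Next I would invoke the spectral theorem for self-adjoint operators on the finite dimensional real inner product space $(V,b')$: there is a $b'$-orthonormal basis $e_1,\dots,e_n$ of eigenvectors of $A$, say $Ae_i=\lambda_i e_i$ with $\lambda_i\in\reals$. In this basis $b'(e_i,e_j)=\delta_{ij}$, so the matrix of $q'$ is the identity and in particular diagonal. At the same time $b(e_i,e_j)=b'(Ae_i,e_j)=\lambda_i b'(e_i,e_j)=\lambda_i\delta_{ij}$, so the matrix of $q$ is $\mathrm{diag}(\lambda_1,\dots,\lambda_n)$. Hence $e_1,\dots,e_n$ simultaneously diagonalises both $q$ and $q'$, as required.

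Equivalently, and perhaps more transparently, one may first choose a basis in which $q'$ is represented by the identity matrix -- this exists precisely because $q'$ is positive definite -- thereby identifying $b'$ with the standard inner product and $q$ with some real symmetric matrix $a$. The only remaining freedom is a change of basis by a $b'$-orthogonal matrix, which leaves $q'$ represented by the identity, and the spectral theorem for real symmetric matrices supplies such an orthogonal matrix diagonalising $a$. The two viewpoints clearly coincide.

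There is essentially no serious obstacle, the statement being a classical fact; the only points needing a little care are that $A$ is well defined (using non-degeneracy of $b'$) and self-adjoint (using symmetry of $b$). The substantive input is the spectral theorem, whose validity over $\reals$ for self-adjoint operators is exactly what guarantees that the eigenbasis can be taken $b'$-orthonormal and that the eigenvalues $\lambda_i$ are real, so that both forms become simultaneously diagonal.
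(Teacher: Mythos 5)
Your proof is correct and follows the same route as the paper, whose one-line proof (``apply the spectral theorem for symmetric bilinear forms to $q$ relative to the inner product space $(V,q')$'') is exactly the argument you spell out via the $b'$-self-adjoint operator $A$ with $b(v,w)=b'(Av,w)$. You have simply made explicit the details the paper leaves implicit, and the matrix reformulation in your second paragraph is the same argument in coordinates.
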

\begin{proof}
Apply the spectral theorem for symmetric bilinear forms to $q$ relatively to the inner product space $(V,q')$.
\end{proof}

Given $q\in\qu{\nd}{E}$ with associated symmetric bilinear form $$b_q(u,v)=\frac{1}{2}\left(q(u+v)-q(u)-q(v)\right)\;,$$ we denote by $\tilde{q}\colon E\to E^*$ the perfect pairing given by $\tilde{q}(u) = b_q(u,\cdot)$ where $E^*$ is the dual of $E$. Now let us fix a positive definite form $q'\in\qu{n,0}{E}$ and for each $q\in\qu{\nd}{E}$ define $$L_{q'q}=\tilde{q'}^{-1}\circ \tilde{q}\in\gau{E}\;.$$ By construction $b_q(u,v)=b_{q'}(L_{q'q}u,v)$ for any $u,v\in E$, hence the diagonalising basis of Lemma \ref{lem:simul} diagonalises $(L_{q'q})_x\in\GL{E_x}$. In the case $q$ is positive definite the eigenvalues of $(L_{q'q})_x$ are positive and we write $(\sqrt{L}_{q'q})_x$ for its unique positive square root. By the continuity of the operation $M\mapsto \sqrt{M}$ for positive definite matrices $M$ together with the local triviality of $E$ we get $\sqrt{L}_{q'q}\in\gau{E}$. Moreover, since $L_{q'q}$ is $b_{q'}$-symmetric (as follows from $b_q(u,v)=b_{q'}(L_{q'q}u,v)$), by the spectral calculus also $\sqrt{L}_{q'q}$ is $b_{q'}$-symmetric, so we get $b(u,v)=b_{q'}(\sqrt{L}_{q'q}u,\sqrt{L}_{q'q}v)$ for any $u,v\in E$, that is $$q(v)=q'(\sqrt{L}_{q'q}v)\quad\text{for each}\; v\in E\;.$$

Among all non-degenerate quadratic forms, the positive definite ones play a distinctive role due to the following well-known fundamental fact.

\begin{prop}\label{prop:convex}
$\qu{n,0}{E}$ is a convex subset of $\map{E,\reals}$.
\end{prop}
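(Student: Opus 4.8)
The plan is to reduce the statement to a purely fibrewise fact, namely that a convex combination of positive definite quadratic forms on a fixed vector space is again positive definite. Since $\map{E,\reals}$ is a real vector space under pointwise addition and scalar multiplication, for $q_0,q_1\in\qu{n,0}{E}$ and $t\in[0,1]$ the convex combination $q_t=(1-t)q_0+tq_1$ is a well-defined element of $\map{E,\reals}$: it is continuous because it is a linear combination of the continuous functions $q_0$ and $q_1$. It then remains only to check that $q_t$ actually lies in $\qu{n,0}{E}$, that is, that its restriction to each fibre is a positive definite quadratic form.

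First I would observe that for each $x\in X$ the restriction $(q_t)_x=(1-t)(q_0)_x+t(q_1)_x$ is a quadratic form on $E_x$, being a linear combination of the quadratic forms $(q_0)_x$ and $(q_1)_x$ (the quadratic forms on a fixed vector space themselves form a vector space). Next I would verify positive definiteness: for $t\in\{0,1\}$ we recover $q_0$ or $q_1$, which are positive definite by hypothesis, while for $t\in(0,1)$ and any nonzero $v\in E_x$ both coefficients $1-t$ and $t$ are strictly positive and both values $(q_0)_x(v)$ and $(q_1)_x(v)$ are strictly positive, so $(q_t)_x(v)>0$. Hence $(q_t)_x$ is positive definite, in particular non-degenerate with inertia $(n,0)$, and therefore $q_t\in\qu{n,0}{E}$.

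There is essentially no obstacle here: the whole argument is the observation that positive definiteness is a convex condition fibre by fibre, and continuity is inherited automatically from the vector space structure on $\map{E,\reals}$. The only point requiring a word of care is that one must confirm the convex combination is a genuine continuous quadratic form of constant inertia on every fibre, rather than merely a fibrewise-defined function; this is immediate since linear combinations of quadratic forms are quadratic forms and linear combinations of continuous functions are continuous.
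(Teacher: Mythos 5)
Your proof is correct and follows essentially the same route as the paper's: the paper's one-line argument likewise observes that any positive linear combination $t_0q_0+t_1q_1$ with $t_0,t_1>0$ remains in $\qu{n,0}{E}$, so the straight segment stays in the set. You simply spell out the fibrewise positive-definiteness and continuity checks that the paper leaves implicit.
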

\begin{proof}
For any $q_0,q_1\in\qu{n,0}{E}$ and any real numbers $t_0,t_1>0$ we have $t_0q_0+t_1q_1\in\qu{n,0}{E}$, therefore $[0,1]\ni t\mapsto tq_1+(1-t)q_0$ is a straight path in $\qu{n,0}{E}$ from $q_0$ to $q_1$.
\end{proof}

\begin{defn}
Given $q\in\qu{\nd}{E}$, a $q$-splitting of $E$ is a splitting $E=E^+\oplus E^-$ such that $q |_{E^+}$ is positive definite and $q |_{E^-}$ is negative definite.
\end{defn}

\begin{prop}\label{prop:splittings}
Given $q\in\qu{\nd}{E}$, a choice of $r\in\qu{n,0}{E}$ determines a canonical $q$-splitting $E=E^+_q(r)\oplus E^-_q(r)$ and any $q$-splitting arises in this fashion for some $r\in\qu{n,0}{E}$. The homotopy and isomorphism classes of $(E^+_q(r), E^-_q(r))$ are independent of $r$.
\end{prop}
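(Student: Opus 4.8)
The plan is to define $(E^+_q(r),E^-_q(r))$ as the positive and negative spectral subbundles of the fibrewise $b_r$-self-adjoint automorphism $L_{rq}=\tilde r^{-1}\tilde q\in\gau{E}$ introduced above, and to extract all three assertions from the convexity of $\qu{n,0}{E}$ established in Proposition \ref{prop:convex}. Recall that $b_q(u,v)=b_r(L_{rq}u,v)$ with $L_{rq}$ being $b_r$-symmetric; since $q$ is non-degenerate each $(L_{rq})_x$ is invertible, so $0$ is never a fibrewise eigenvalue. Trivialising $E$ locally and applying fibrewise Gram--Schmidt to $r$ turns $(L_{rq})_x$ into a symmetric matrix $S(x)$ varying continuously with $x$ and having real nonzero eigenvalues; I would then take $P(x)=\tfrac12\bigl(\id+S(x)\,|S(x)|^{-1}\bigr)$ with $|S|=\sqrt{S^2}$, which is continuous because $S^2$ is positive definite and is exactly the spectral projection onto the positive eigenspaces. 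As $\mathrm{tr}\,P(x)$ is integer-valued and continuous, hence locally constant, $\mathrm{im}\,P$ and $\ker P$ are complementary subbundles, which I define to be $E^+_q(r)$ and $E^-_q(r)$; evaluating on an eigenvector $v$ with eigenvalue $\lambda$ gives $q(v)=b_r(L_{rq}v,v)=\lambda\,r(v)$, so $q$ is positive definite on the first and negative definite on the second. This establishes the canonical $q$-splitting, and I record that its two summands are simultaneously $r$- and $q$-orthogonal.

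For the independence of $r$, which is the cleanest step, I would join any $r_0,r_1\in\qu{n,0}{E}$ by the segment $r_t=(1-t)r_0+tr_1$, which stays in $\qu{n,0}{E}$ by Proposition \ref{prop:convex}. Over $X\times[0,1]$ the operators $L_{r_tq}$ form a continuous family whose fibrewise spectra avoid $0$ for every $t$, so the construction above produces a continuous projection field and hence a subbundle of $E\times[0,1]\to X\times[0,1]$ restricting to $E^+_q(r_i)$ at $t=i$; together with its complement this is a homotopy of splittings in the sense of Definition \ref{defn:splittings}. Thus $(E^+_q(r_0),E^-_q(r_0))\sim(E^+_q(r_1),E^-_q(r_1))$, and homotopic splittings are isomorphic by Remark \ref{rmk:splittings}, giving independence of both classes.

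To see that every $q$-splitting arises I would start from the observation, noted above, that spectral summands are $q$-orthogonal, so I first replace the negative summand of a given $q$-splitting $(F^+,F^-)$ by the canonical $q$-orthogonal complement $(F^+)^{\perp_q}$, which by non-degeneracy is again a maximal negative complement. Declaring $F^+\perp_r(F^+)^{\perp_q}$ and putting $r=q$ on $F^+$ and $r=-q$ on $(F^+)^{\perp_q}$ yields $r\in\qu{n,0}{E}$, and a one-line fibre check then shows $L_{rq}=+\id$ on $F^+$ and $-\id$ on $(F^+)^{\perp_q}$, so that $E^+_q(r)=F^+$ and $E^-_q(r)=(F^+)^{\perp_q}$. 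Finally I would compare $(F^+,F^-)$ with $(F^+,(F^+)^{\perp_q})$: writing $F^-$ as the graph of a bundle map $A\colon(F^+)^{\perp_q}\to F^+$, the family $t\mapsto\mathrm{graph}(tA)$ consists of complements of $F^+$ on which $q$ stays negative definite, since $b_q(w,Aw)=0$ forces $q(w+tAw)=q(w)+t^2q(Aw)\le q(w+Aw)<0$; this is a homotopy, and the two complements are isomorphic as both are isomorphic to $E/F^+$. Hence every $q$-splitting arises up to the homotopy and isomorphism used in the classification.

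The step I expect to be the main obstacle is the continuity in the first paragraph: one must guarantee that the fibrewise spectral projection varies continuously and with locally constant rank, and this is precisely where non-degeneracy---providing a uniform gap of the spectrum away from $0$---and local triviality are indispensable, which is why I would package it through the sign function (or equivalently a Riesz contour integral) rather than through bare eigenspaces, whose dimensions may jump. A secondary subtlety is the forced $q$-orthogonality of spectral splittings in the surjectivity step, which I handle by passing to the $q$-orthogonal complement as above.
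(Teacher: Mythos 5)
Your construction of the canonical splitting and your independence-of-$r$ argument coincide with the paper's: the paper likewise defines $E^\pm_q(r)$ as the positive/negative spectral subbundles of $L_{rq}$, and likewise obtains independence by running the same construction over $E\times[0,1]$ with the segment $r_t=(1-t)r_0+tr_1$, invoking Proposition \ref{prop:convex} and then Remark \ref{rmk:splittings} to pass from homotopy to isomorphism. On the continuity point you are in fact more careful than the paper, which simply asserts that since $L_{rq}\in\gau{E}$ ``its eigenspaces vary continuously''---read literally this is false, as individual eigenspaces can collide and jump in dimension; your sign-function projection $P=\frac{1}{2}\bigl(\id+S\abs{S}^{-1}\bigr)$ is the correct way to see that the \emph{sum} of the positive eigenspaces is a continuous subbundle, using exactly the spectral gap at $0$ furnished by non-degeneracy.

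Where you genuinely diverge is the surjectivity clause, and there you have caught a real imprecision in the paper. The paper's converse is one line: given a $q$-splitting $(E^+_q,E^-_q)$, set $r=q^+\oplus -q^-$ and claim this $r$ reproduces the given splitting. As you observed, every spectral splitting is automatically $b_q$-orthogonal (for $u\in E^+_q(r)$, $v\in E^-_q(r)$ one has $b_q(u,v)=b_r(L_{rq}u,v)=0$ since $L_{rq}$ preserves $E^+_q(r)$ and the summands are $b_r$-orthogonal), whereas the paper's definition of $q$-splitting imposes no orthogonality. For a non-orthogonal $q$-splitting the paper's $L_{rq}$ acquires off-diagonal blocks and its spectral splitting differs from $(E^+_q,E^-_q)$: already fibrewise, with $q=x^2-y^2$, the pair $\bigl(\mathrm{span}(e_1),\mathrm{span}(\epsilon e_1+e_2)\bigr)$ with $0<\epsilon<1$ is a $q$-splitting arising from no $r$ whatsoever. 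So the literal statement ``any $q$-splitting arises in this fashion'' holds only for $b_q$-orthogonal splittings---a reading the paper tacitly adopts, as also witnessed by the proof of Lemma \ref{lem:commonsplitting}, which writes $q_i=q^+_i\oplus q^-_i$ for the common splitting. Your repair is correct and sufficient: the choice $r=q|_{F^+}\oplus -q|_{(F^+)^{\perp_q}}$ gives $L_{rq}=\id\oplus(-\id)$, and your graph deformation $t\mapsto\mathrm{graph}(tA)$, with the negativity check $q(w+tAw)=q(w)+t^2q(Aw)\le q(w+Aw)<0$, shows every $q$-splitting is homotopic, hence isomorphic, to a canonical one---which is exactly the weaker statement needed for the well-definedness of $\theta_\sim$ and $\theta_\simeq$ and for Theorem \ref{thm:invariants}. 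In short: same skeleton as the paper, with a rigorous continuity step and a correct weakening of the surjectivity claim; alternatively, one may read ``$q$-splitting'' in the proposition as ``$b_q$-orthogonal $q$-splitting'', under which reading the paper's one-line converse is exact and your detour becomes unnecessary.
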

\begin{proof}
Define $E^+_q(r)_x$ to be the direct sum of all eigenspaces of $(L_{rq})_x$ with positive eigenvalues, and analogously $E^-_q(r)_x$ for the negative ones. Since $L_{rq}\in\gau{E}$, then its eigenspaces vary continuously and $E=E^+_q(r)\oplus E^-_q(r)$ is clearly a splitting. Moreover by construction $q |_{E^+_q(r)}$ is positive definite and $q |_{E^-_q(r)}$ is negative definite. Conversely, given a $q$-splitting $E=E^+_q\oplus E^-_q$, the choice $r=q^+\oplus -q^-\in\qu{n,0}{E}$ with $q^+=q |_{E^+_q}$ and $q^-=q |_{E^-_q}$ defines a positive definite quadratic form which produces the $q$-splitting $(E^+_q,E^-_q)$ by the above procedure. Now assume we have $q$-splittings $(E^+_q(r_0), E^-_q(r_0))$ and $(E^+_q(r_1), E^-_q(r_1))$ for some $r_0,r_1\in\qu{n,0}{E}$ and consider the quadratic form on $E\times[0,1]\to X\times[0,1]$ defined by $(v,t)\mapsto q(v)$, which we still call $q$ for lack of a better name. By Proposition \ref{prop:convex} the path $t\mapsto tr_1+(1-t)r_0$ defines a positive definite quadratic form $r$ on $E\times [0,1]$ which gives a $q$-splitting of $E\times [0,1]$ by the above procedure. By restricting this splitting to $E\times\{i\}$ we obtain precisely $(E^+_q(r_i), E^-_q(r_i))$, $i=0,1$, which are then homotopic. By Remark \ref{rmk:splittings} they are also isomorphic.
\end{proof}

By the above proposition we recover the well-known fact that $E$ admits a quadratic form of inertia $(n_+,n_-)$ if and only if it admits a splitting into two rank $n_+$ and rank $n_-$ subbundles.

For each $r\in\qu{n,0}{E}$ let us denote by $\theta_r\colon\qu{\nd}{E}\to \splitt{E}$ the continuous map $q\mapsto (E^+_q(r),E^-_q(r))$. Proposition \ref{prop:splittings} implies that any $r\in\qu{n,0}{E}$ induces maps
\begin{equation}
\begin{aligned}
\theta_\sim\colon\qu{\nd}{E}&\to \splitt{E}/\sim\\
\theta_\simeq\colon\qu{\nd}{E}&\to \splitt{E}/\simeq
\end{aligned}
\end{equation}
given by assigning to each $q\in\qu{\nd}{E}$ the homotopy or isomorphism class of any of its $q$-splittings. It is the purpose of the next theorem to establish the invariance properties of these maps under the action of $\gau{E}$ and $\gaunot{E}$ on $\qu{\nd}{E}$, respectively.

First we need the following lemma.

\begin{lem}\label{lem:commonsplitting}
Given $q_0,q_1\in\qu{\nd}{E}$ and a splitting of $E$ that is both a $q_0$-splitting and a $q_1$-splitting, there is $\varphi\in\gaunot{E}$ such that $q_0=\varphi q_1$.
\end{lem}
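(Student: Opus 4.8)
The plan is to prove the lemma in two stages: first use a completion-of-the-square argument to replace the given common splitting by one that is orthogonal for \emph{both} forms, and then settle the orthogonal case by the square-root construction applied separately to $E^+$ and $E^-$. Write the common splitting as $E=E^+\oplus E^-$. The essential point to keep in mind is that such a splitting need \emph{not} be $b_{q_i}$-orthogonal: there may be nonzero cross terms $b_{q_i}(u^+,u^-)$, and eliminating them is the real content of the argument. A secondary but pervasive concern will be to verify that every automorphism produced lands in the identity path component $\gaunot{E}$, which I will do by exhibiting an explicit path to $\id_E$ in each case.

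\emph{Reduction to orthogonal splittings.} For each $i\in\{0,1\}$ I would use the non-degeneracy of $q_i|_{E^-}$ to define a continuous bundle morphism $S_i\colon E^+\to E^-$, fibrewise by requiring $b_{q_i}(S_iu^+,w^-)=b_{q_i}(u^+,w^-)$ for all $w^-\in E^-$; continuity of $S_i$ follows from the continuity of $q_i$ and of the splitting, together with the continuity of fibrewise inversion, read off in a local trivialisation of $E$. Completing the square then gives $q_i(v)=\hat q_i^+(u^+)+\hat q_i^-(u^-+S_iu^+)$ for $v=u^++u^-$, where $\hat q_i^+:=q_i|_{E^+}-q_i\circ S_i$ is positive definite on $E^+$ (it is $q_i|_{E^+}$ plus the non-negative term $-q_i\circ S_i$, since $S_iu^+\in E^-$) and $\hat q_i^-:=q_i|_{E^-}$ is negative definite on $E^-$. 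Setting $\hat q_i:=\hat q_i^+\oplus\hat q_i^-$, which is now orthogonal with respect to $E^+\oplus E^-$, and $T_i\colon E\to E,\ u^++u^-\mapsto u^++(S_iu^++u^-)$, one checks directly that $q_i=T_i\hat q_i$. The unipotent $T_i$ is a gauge transformation, and the path $s\mapsto\bigl(u^++u^-\mapsto u^++(sS_iu^++u^-)\bigr)$ joins it to $\id_E$ inside $\gau{E}$, so $T_i\in\gaunot{E}$.

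\emph{The orthogonal case and assembly.} It now suffices to find $\psi\in\gaunot{E}$ with $\hat q_0=\psi\hat q_1$. On $E^+$ the forms $\hat q_0^+,\hat q_1^+$ are both positive definite, so the square-root construction preceding Proposition~\ref{prop:convex}, applied on the bundle $E^+$ with reference form $\hat q_1^+$, yields $\psi^+:=\sqrt{L}_{\hat q_1^+\hat q_0^+}\in\gau{E^+}$ with $\hat q_0^+=\psi^+\hat q_1^+$; since $L_{\hat q_1^+\hat q_0^+}$ has positive eigenvalues, the fibrewise functional-calculus path $s\mapsto(L_{\hat q_1^+\hat q_0^+})^{s/2}$ is continuous and joins $\psi^+$ to $\id_{E^+}$, whence $\psi^+\in\gaunot{E^+}$. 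Applying the same to the positive definite forms $-\hat q_0^-,-\hat q_1^-$ on $E^-$ produces $\psi^-\in\gaunot{E^-}$ with $\hat q_0^-=\psi^-\hat q_1^-$. Then $\psi:=\psi^+\oplus\psi^-$ lies in $\gaunot{E}$, being joined to $\id_E$ by the block-diagonal path $s\mapsto\psi^+_s\oplus\psi^-_s$ built from the paths $\psi^\pm_s$ above, and by orthogonality it satisfies $\hat q_0=\psi\hat q_1$. Combining the two stages, for every $v$ we get $q_0(v)=\hat q_0(T_0v)=\hat q_1(\psi T_0v)=q_1(T_1^{-1}\psi T_0v)$, so $q_0=\varphi q_1$ with $\varphi=T_1^{-1}\psi T_0\in\gaunot{E}$, a product of elements of the subgroup $\gaunot{E}$.

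I expect the main obstacle to be precisely the first stage: recognising that a common $q_i$-splitting carries cross terms and disposing of them by the completion-of-the-square map $T_i$, while simultaneously arguing that the fibrewise morphism $S_i$ is continuous and that $T_i$ is connected to the identity. Once the splitting is made orthogonal, the remaining work is a routine and essentially separate application of the square-root construction on each summand, with the only care being to keep the functional-calculus powers inside $\gaunot{E^\pm}$.
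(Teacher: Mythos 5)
Your proof is correct, and in one respect it is more careful than the paper's own. Your second stage is in substance the paper's \emph{entire} proof: the paper first settles the positive definite case with the square-root construction, taking $\varphi=\sqrt{L}_{q_1q_0}$ and connecting it to the identity along the convex path $q_t=tq_1+(1-t)q_0$ via $t\mapsto\sqrt{L}_{q_tq_0}$ (your functional-calculus path $s\mapsto L^{s/2}$ is an equally valid alternative), and then applies this summandwise, writing ``Call $E=E^+\oplus E^-$ the common splitting so $q_i=q^+_i\oplus q^-_i$''. That step tacitly assumes the common splitting is $b_{q_i}$-orthogonal for both forms, which the paper's definition of a $q$-splitting does not require --- precisely the point you flag: without orthogonality, matching the restrictions $q_0^\pm=\varphi^\pm q_1^\pm$ says nothing about the cross terms, and $\varphi^+\oplus\varphi^-$ need not carry $q_1$ to $q_0$. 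Your first stage repairs this in the stated generality: the unipotent completion-of-the-square maps $T_i$ (with $S_i$ well defined fibrewise by non-degeneracy of $q_i|_{E^-}$, and the path $s\mapsto sS_i$ keeping $T_i$ in $\gaunot{E}$) reduce to the orthogonal situation, after which the block-diagonal argument is sound because $\hat q_1(\psi^+u^++\psi^-u^-)$ genuinely has no cross term. It is worth noting why the paper's shortcut does no harm downstream: in the only place the lemma is invoked (Theorem \ref{thm:invariants}), the common splitting is the image of eigenspace splittings $\theta_r(q)=(E^+_q(r),E^-_q(r))$, which are automatically $b_q$-orthogonal, and the $f$-image of an orthogonal splitting is orthogonal for the pulled-back form $fq_1$; so the lemma is only ever applied to orthogonal common splittings. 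In short: same core mechanism (fibrewise square root plus explicit paths into $\gaunot{E}$), but your extra reduction proves the lemma as literally stated, at the modest cost of the continuity checks for $S_i$, $T_i$ and the power path, all of which you carry out correctly.
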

\begin{proof}
First let us assume $q_0,q_1\in\qu{n,0}{E}$, so there is the unique common splitting $(E,0)$. Consider the path $t\mapsto q_t=tq_1+(1-t)q_0$ in $\qu{n,0}{E}$ from $q_0$ to $q_1$ and construct $L_{q_tq_0}\in\gau{E}$ as in the above discussion. We show that $\varphi=\sqrt{L}_{q_1q_0}$ satisfies the statement. Indeed $L_{q_0 q_0}=\id_{E}$ and the assignment $t\mapsto L_{q_tq_0}$ is continuous, so $L_{q_1q_0}\in\gaunot{E}$ and analogously $\sqrt{L}_{q_1q_0}\in\gaunot{E}$. Finally, by construction $q_0(u)=q_1(\sqrt{L}_{q_1q_0}u)$. Now let us consider the general case of $q_0,q_1\in\qu{\nd}{E}$. Call $E=E^+\oplus E^-$ the common splitting so $q_i=q^+_i\oplus q^-_i$, $i=0,1$ where $q^+_i=q_i |_{E^+}$ and $q^-_i=q_i |_{E^-}$. The positive definite case just proved applied to $q^+_0,q^+_1\in\qu{n_+,0}{E^+}$ and $-q^-_0,-q^-_1\in\qu{n_-,0}{E^-}$, where $n_+$ and $n_-$ are the ranks of $E^+$ and $E^-$ respectively, gives $\varphi^+\in\gaunot{E^+}$ and $\varphi^-\in\gaunot{E^-}$ such that $q^+_0=\varphi q^+_1$ and $q^-_0=\varphi q^-_1$. Then $\varphi=\varphi^+\oplus \varphi^-\in\gaunot{E}$ is the sought-for automorphism.
\end{proof}

The above lemma is a more general version of the well-known result that any two Euclidean metrics on the same vector bundle are isometric (e.g. \cite{GreHalVan72} Proposition VI p.68 or \cite{MilSta74} Problem 2-E p.24). In particular the fact that the isometry can be taken to lie in the path component of the automorphism group of the bundle seems left unreported by all published treatments so far, to the best of our knowledge.

\begin{thm}\label{thm:invariants}
The map $\theta_\simeq$ (respectively, $\theta_\sim$) is a complete invariant for the action of $\gau{E}$ (respectively, $\gaunot{E}$) on $\qu{\nd}{E}$.
\end{thm}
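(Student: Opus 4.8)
The plan is to establish, for each of the two maps, the two defining properties of a complete invariant: \emph{invariance} (the map is constant on orbits) and \emph{completeness} (equal values force the same orbit). The arguments for $\theta_\simeq$ under $\gau{E}$ and for $\theta_\sim$ under $\gaunot{E}$ run in perfect parallel, differing only in whether one works with the full group or its identity path component, so I would carry them out simultaneously, flagging the single place where the distinction matters. Recall that by the discussion preceding the theorem (via Proposition \ref{prop:splittings}) both maps are well defined, sending $q$ to the isomorphism, respectively homotopy, class of \emph{any} $q$-splitting.

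For invariance, suppose $q_0=\varphi q_1$ with $\varphi\in\gau{E}$ (respectively $\gaunot{E}$), so $q_0(v)=q_1(\varphi v)$. I would pick any $q_1$-splitting $(E_1^+,E_1^-)$ and check directly that $(\varphi^{-1}(E_1^+),\varphi^{-1}(E_1^-))$ is a $q_0$-splitting: on $\varphi^{-1}(E_1^\pm)$ the form $q_0$ equals the definite form $q_1|_{E_1^\pm}$ precomposed with the isomorphism $\varphi$, hence is itself definite of the correct sign. But this $q_0$-splitting is exactly the image of $(E_1^+,E_1^-)$ under $\varphi^{-1}$, which lies in $\gau{E}$ (respectively $\gaunot{E}$). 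By Proposition \ref{prop:key} the two splittings are therefore isomorphic (respectively homotopic), and since by Proposition \ref{prop:splittings} the class does not depend on which splitting is chosen, we conclude $\theta_\simeq(q_0)=\theta_\simeq(q_1)$ (respectively $\theta_\sim(q_0)=\theta_\sim(q_1)$).

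For completeness, suppose the two forms have equal value under the map, so some $q_0$-splitting $(E_0^+,E_0^-)$ and some $q_1$-splitting $(E_1^+,E_1^-)$ are isomorphic (respectively homotopic). By Proposition \ref{prop:key} there is $\psi\in\gau{E}$ (respectively $\gaunot{E}$) carrying the first splitting onto the second. The key move is to replace $q_1$ by $q_1'=\psi q_1$, which lies in the same orbit as $q_1$ and for which, by the same definiteness computation as above, $(E_0^+,E_0^-)$ is now a $q_1'$-splitting. Thus $q_0$ and $q_1'$ share the common splitting $(E_0^+,E_0^-)$, and Lemma \ref{lem:commonsplitting} supplies $\varphi'\in\gaunot{E}$ with $q_0=\varphi' q_1'$. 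Composing and using the right-action identity $\varphi'(\psi q_1)=(\psi\varphi')q_1$, I obtain $q_0=(\psi\varphi')q_1$ with $\psi\varphi'$ in $\gau{E}$ (respectively $\gaunot{E}$), so the forms lie in the same orbit.

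The step I expect to bear the weight is this completeness direction, and the crucial structural point is that Lemma \ref{lem:commonsplitting} \emph{always} returns an automorphism in the identity component $\gaunot{E}$, irrespective of the starting forms. This is what lets a single argument serve both statements: the alignment automorphism $\psi$ absorbs all component information (living in $\gau{E}$ or $\gaunot{E}$ according to which statement is being proved), while the residual correction $\varphi'$ never leaves the identity component and so cannot disturb the component bookkeeping. The only genuinely calculational points are verifying that $(E_0^+,E_0^-)$ really becomes a $q_1'$-splitting and tracking the right-action composition $\varphi'(\psi q_1)=(\psi\varphi')q_1$; both are routine.
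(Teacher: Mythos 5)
Your proof is correct and follows essentially the same route as the paper: completeness is obtained by aligning the two chosen splittings with an automorphism $\psi$ supplied by Proposition \ref{prop:key}, then invoking Lemma \ref{lem:commonsplitting} on the resulting common splitting and observing that the correction it returns always lies in $\gaunot{E}$, so a single argument covers both group actions --- exactly the paper's structure. The only cosmetic divergence is in the invariance direction for $\theta_\sim$, where you appeal to the easy direction of Proposition \ref{prop:key} while the paper directly builds a splitting of $E\times[0,1]\to X\times[0,1]$ from a path $t\mapsto f_t$ (the same underlying construction), and your careful use of $\varphi^{-1}$ and of the composition identity $\varphi'(\psi q_1)=(\psi\varphi')q_1$ is, if anything, slightly more precise than the paper's bookkeeping.
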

\begin{proof}
We are going to show that two quadratic forms $q_0, q_1$ are in the same $\gau{E}$- or $\gaunot{E}$-orbit if and only if $\theta_\simeq(q_0)=\theta_\simeq(q_1)$ or $\theta_\sim(q_0)=\theta_\sim(q_1)$, respectively. Assume $\theta_\simeq(q_0)=\theta_\simeq(q_1)$ or $\theta_\sim(q_0)=\theta_\sim(q_1)$, according to the case. By Proposition \ref{prop:key} there are $q_i$-splittings $(E^+_i, E^-_i)$, $i=0,1$, such that $f(E^+_0)=E^+_1$ and $f(E^-_0)=E^-_1$ for some $f$ lying in $\gaunot{E}$ or in $\gau{E}$, according to the case. Then $f q_1$ and $q_0$ have the splitting $(E^+_0, E^-_0)$ in common and by Lemma \ref{lem:commonsplitting} there is $g\in\gaunot{E}$ such that $(g\circ f) (q_1)=q_0$. Clearly $g\circ f$ lies in $\gaunot{E}$ or in $\gau{E}$ accordingly. We now prove the other direction for the two cases separately, starting with $\theta_\simeq$. Let us then be given two quadratic forms $q_0,q_1\in\qu{\nd}{E}$ and assume there is $f\in\gau{E}$ such that $f q_0=q_1$. If $(E^+, E^-)$ is a $q_0$-splitting, then $\left(f(E^+), f(E^-)\right)$ is a $q_1$-splitting obviously lying in the same isomorphism class of $(E^+, E^-)$. This concludes the proof for $\theta_\simeq$. Now assume $q_0,q_1\in\qu{\nd}{E}$ admit some $f\in\gaunot{E}$ such that $f q_0=q_1$ and choose a path $t\mapsto f_t\in\gaunot{E}$ such that $f_0=\id_{E}$ and $f_1=f$. This path defines a non-degenerate quadratic form (abusively denoted by) $f_t q_0$ on $E\times [0,1]\to X\times [0,1]$ which coincides with $q_i$ on $E\times \{i\}$ for $i=0,1$. Choose a $f_t q_0$-splitting $(S^+,S^-)$. Obviously $(S^+ |_{X\times\{i\}},S^- |_{X\times\{i\}})$ are $q_i$-splittings lying in the same homotopy class, and the proof is complete. 
\end{proof}

\begin{cor}
The $\gaunot{E}$-orbits of $\qu{\nd}{E}$ are precisely its path components, and the $\gau{E}$-orbits are disjoint unions of path components.
\end{cor}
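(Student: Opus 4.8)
The plan is to prove the two assertions in turn, deriving both from Theorem~\ref{thm:invariants}, which identifies membership in a common $\gaunot{E}$-orbit with equality of the homotopy invariant $\theta_\sim$. For the first assertion I would establish two inclusions separately: every $\gaunot{E}$-orbit is contained in a single path component of $\qu{\nd}{E}$, and conversely every path component is contained in a single $\gaunot{E}$-orbit. Together these force orbits and path components to coincide.

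For the inclusion ``orbit $\subseteq$ path component'' I would use that $\gaunot{E}$ is by definition path-connected. Given $q$ and $fq$ with $f\in\gaunot{E}$, choose a path $t\mapsto f_t\in\gaunot{E}$ from $\id_E$ to $f$; since the action $\gau{E}\times\qu{\nd}{E}\to\qu{\nd}{E}$ is continuous, $t\mapsto f_t q$ is a path in $\qu{\nd}{E}$ joining $q$ to $fq$. Hence each orbit is path-connected and lies in a single path component.

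The reverse inclusion ``path component $\subseteq$ orbit'' is the substantive step, and it essentially reverses the reading of the corresponding computation in the proof of Theorem~\ref{thm:invariants}. Given $q_0,q_1$ in the same path component, pick a path $t\mapsto q_t$ between them. By the exponential law (legitimate since $E$ is locally compact) this path corresponds to a continuous map $Q\colon E\times[0,1]\to\reals$, $Q(v,t)=q_t(v)$, which is fibrewise non-degenerate because each $q_t$ is; thus $Q\in\qu{\nd}{E\times[0,1]}$ for the bundle $E\times[0,1]\to X\times[0,1]$. One checks that $X\times[0,1]$ is again connected, locally connected, paracompact and Hausdorff, so the earlier machinery applies. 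Fixing $r\in\qu{n,0}{E}$ and pulling it back to $E\times[0,1]$, Proposition~\ref{prop:splittings} produces a $Q$-splitting $(S^+,S^-)$ whose restrictions to $X\times\{i\}$ are $q_i$-splittings; being the two ends of a single splitting of $E\times[0,1]$, these are homotopic in the sense of Definition~\ref{defn:splittings}, so $\theta_\sim(q_0)=\theta_\sim(q_1)$. By Theorem~\ref{thm:invariants}, $q_0$ and $q_1$ lie in one $\gaunot{E}$-orbit. Combining the two inclusions gives the first assertion.

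Finally, the $\gau{E}$ statement is a formal consequence: since $\gaunot{E}$ is a subgroup of $\gau{E}$, every $\gau{E}$-orbit is a disjoint union of the $\gaunot{E}$-orbits it contains, and by the first assertion these are exactly path components. The main obstacle is the single delicate point in the third paragraph—the exponential-law passage from a path in $\qu{\nd}{E}$ to a bona fide non-degenerate quadratic form on the cylinder $E\times[0,1]$, together with the verification that this cylinder still satisfies the standing hypotheses so that Proposition~\ref{prop:splittings} may be invoked—but both are routine given the setup already established.
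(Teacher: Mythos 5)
Your proof is correct and takes essentially the route the paper intends: the corollary is stated there without proof as an immediate consequence of Theorem \ref{thm:invariants}, and your two inclusions (each $\gaunot{E}$-orbit is path-connected via a path $t\mapsto f_tq$ in the continuous action; conversely a path of forms yields, via the cylinder $E\times[0,1]$ and Proposition \ref{prop:splittings}, homotopic $q_i$-splittings, hence $\theta_\sim(q_0)=\theta_\sim(q_1)$ and a common orbit) are exactly the arguments already rehearsed in the second half of the theorem's proof, there for the special path $t\mapsto f_tq_0$. The one step you flag as delicate --- the exponential-law passage from a path in $\qu{\nd}{E}$ to a form on $E\times[0,1]$ --- is legitimate under the paper's standing assertion that $E$ is locally compact Hausdorff, so no gap remains.
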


There is a more geometric interpretation of $\theta_\simeq$. We recall that with each non-degenerate quadratic form $q\in\qu{n_+,n_-}{E}$ there is associated the principal $\OO{n_+,n_-}$-bundle $\fr{q}{E}\to X$ of $q$-orthonormal frames of $E$.

\begin{prop}
For any two quadratic forms $q_0,q_1\in\qu{\nd}{E}$ we have $\theta_\simeq(q_0)=\theta_\simeq(q_1)$ if and only if $\fr{q_0}{E}\simeq\fr{q_1}{E}$.
\end{prop}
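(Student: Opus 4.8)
The plan is to reduce everything to Theorem \ref{thm:invariants} and then translate ``same $\gau{E}$-orbit'' into ``isomorphic frame bundles'' in both directions. First I would dispose of the trivial case in which $q_0$ and $q_1$ have different inertias: then the splittings representing $\theta_\simeq(q_0)$ and $\theta_\simeq(q_1)$ have different ranks, so $\theta_\simeq(q_0)\neq\theta_\simeq(q_1)$, while $\fr{q_0}{E}$ and $\fr{q_1}{E}$ have different structure groups and so cannot be isomorphic as principal bundles; thus both sides of the claimed equivalence fail. Hence I may assume $q_0,q_1\in\qu{n_+,n_-}{E}$ with the common structure group $G=\OO{n_+,n_-}$. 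By Theorem \ref{thm:invariants} the equality $\theta_\simeq(q_0)=\theta_\simeq(q_1)$ holds if and only if $q_0$ and $q_1$ lie in the same $\gau{E}$-orbit, so it suffices to prove that $q_0$ and $q_1$ are $\gau{E}$-equivalent if and only if $\fr{q_0}{E}\simeq\fr{q_1}{E}$ as principal $G$-bundles over $X$. Throughout I regard a point of $\fr{q}{E}$ over $x$ as a linear isomorphism $u\colon\reals^n\to E_x$ pulling $q_x$ back to the standard form $\eta$ of signature $(n_+,n_-)$, so that $q(u(\xi))=\eta(\xi)$, and I use the right $G$-action $u\cdot g=u\circ g$.

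For the forward direction, suppose $q_1=fq_0$ for some $f\in\gau{E}$, that is $q_1(v)=q_0(fv)$. I would observe that post-composition $u\mapsto f\circ u$ sends a $q_1$-frame to a $q_0$-frame, since $q_0\bigl((f\circ u)(\xi)\bigr)=q_1(u(\xi))=\eta(\xi)$; this map covers the identity on $X$, is bijective on fibres because $f$ is a fibrewise linear isomorphism, and is $G$-equivariant because $(f\circ u)\circ g=f\circ(u\circ g)$. Hence it is an isomorphism $\fr{q_1}{E}\to\fr{q_0}{E}$, and its continuity is immediate from continuity of $f$ and of composition.

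For the converse I would exploit the associated-bundle description of $E$: the bundle $E$ is recovered from $\fr{q}{E}$ as the bundle associated to the standard representation of $G$ on $\reals^n$ via $[u,\xi]\mapsto u(\xi)$, under which $q$ corresponds to $\eta$. Given a principal-bundle isomorphism $\Psi\colon\fr{q_1}{E}\to\fr{q_0}{E}$ over the identity, I would define $f\colon E\to E$ by writing $v\in E_x$ as $v=u(\xi)$ for some $q_1$-frame $u$ and setting $f(v)=\Psi(u)(\xi)$. If $u'=u\cdot g$ is another $q_1$-frame with $u(\xi)=u'(\xi')$, then $\xi'=g^{-1}\xi$, and the $G$-equivariance of $\Psi$ gives $\Psi(u')(\xi')=(\Psi(u)\cdot g)(g^{-1}\xi)=\Psi(u)(\xi)$, so $f$ is well defined; it is fibrewise linear by construction, and $q_0(fv)=q_0\bigl(\Psi(u)(\xi)\bigr)=\eta(\xi)=q_1(u(\xi))=q_1(v)$ because $\Psi(u)$ is a $q_0$-frame. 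Applying the same construction to $\Psi^{-1}$ produces the inverse map, so $f$ is a homeomorphism, whence $f\in\gau{E}$ and $fq_0=q_1$, exhibiting $q_0$ and $q_1$ in the same $\gau{E}$-orbit.

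I expect the main obstacle to be the verification that the map $f$ in the converse is genuinely continuous, so that it lies in $\gau{E}$ rather than being merely a fibrewise-defined isomorphism. This is where local triviality of $E$ and of the two frame bundles is used: over a trivializing neighbourhood one can pick a continuous local $q_1$-frame (a local section of $\fr{q_1}{E}$) and express $f$ in coordinates through the continuous, matrix-valued comparison of $\Psi$ applied to that section against a fixed local $q_0$-frame, from which continuity follows at once. Everything else reduces to the routine checks of $G$-equivariance and of the two associated-bundle constructions being mutually inverse.
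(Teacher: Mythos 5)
Your proposal is correct and follows the same route as the paper: both reduce the statement to Theorem \ref{thm:invariants} via the correspondence between principal $\OO{n_+,n_-}$-bundles and vector bundles carrying a quadratic form of inertia $(n_+,n_-)$. The only difference is one of detail, not of strategy --- the paper simply cites this correspondence as an equivalence of groupoids, whereas you unpack it explicitly (the map $u\mapsto f\circ u$ in one direction, the associated-bundle reconstruction of $f$ from $\Psi$ with the continuity check via local sections in the other), which amounts to proving by hand the categorical fact the paper invokes.
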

\begin{proof}
The groupoid of principal $\OO{n_+,n_-}$-bundles over $X$ is equivalent to the groupoid of rank $n_++n_-$ vector bundles over $X$ equipped with a quadratic form of inertia $(n_+,n_-)$, where the morphisms are given by isometric isomorphisms. The statement then amounts to saying that $\theta_\simeq(q_0)=\theta_\simeq(q_1)$ if and only if there is $f\in\gau{E}$ such that $fq_0=q_1$, which is precisely Theorem \ref{thm:invariants}.
\end{proof}

There is another noteworthy description of $\theta_\sim$ as well. First of all we notice that the definition \ref{defn:splittings} of homotopy class of a splitting can be formulated for a single subbundle. We then have the following result.
\begin{prop}
The $\sim$-class of a splitting of $E$ is determined by the homotopy class of any of its two summands.
\end{prop}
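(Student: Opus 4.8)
The plan is to prove the statement in the following concrete form: if $(E'_0,E''_0)$ and $(E'_1,E''_1)$ are splittings of $E$ whose first summands are homotopic as subbundles---that is, there is a subbundle $S'$ of $E\times[0,1]\to X\times[0,1]$ with $S'|_{X\times\{i\}}=E'_i$ for $i=0,1$---then $(E'_0,E''_0)\sim(E'_1,E''_1)$. By the symmetric role of the two summands in Definition \ref{defn:splittings}, the same argument applied to the second summand gives the claim for ``any of its two summands.'' Throughout I use that, by Proposition \ref{prop:key}, $\sim$ coincides with equality of $\gaunot{E}$-orbits and is therefore an equivalence relation; in particular I may invoke transitivity freely.

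The crux is an auxiliary observation: any two complements of a fixed subbundle yield homotopic splittings, i.e.\ if $E=E'\oplus C_0=E'\oplus C_1$ then $(E',C_0)\sim(E',C_1)$. To see this, let $p\colon E\to C_0$ be the projection along $E'$. Its restriction to $C_1$ is an isomorphism onto $C_0$; let $\psi\colon C_0\to E$ be the inverse, with image $C_1$, so that $p\circ\psi=\mathrm{id}$. Then $\phi(c):=\psi(c)-c$ lies in $\ker p=E'$, defining a bundle map $\phi\colon C_0\to E'$ with $C_1=\{c+\phi(c):c\in C_0\}$. For every $t\in[0,1]$ the family $C_t=\{c+t\phi(c):c\in C_0\}$ is a subbundle complementary to $E'$ (if $c+t\phi(c)\in E'$ then applying $p$ forces $c=0$), so the subbundles $E'\times[0,1]$ and $\{(c+t\phi(c),t):c\in C_0,\,t\in[0,1]\}$ form a splitting of $E\times[0,1]$ restricting to $(E',C_0)$ and $(E',C_1)$ at the two ends. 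The construction is global and manifestly continuous. Equivalently, the complements of $E'$ form an affine space over $\hom{E/E',E'}$, which is convex.

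With this observation in hand the main argument is short. Since $X\times[0,1]$ is paracompact, the subbundle $S'\subset E\times[0,1]$ admits a complement $S''$---for instance its orthogonal complement with respect to any Euclidean metric on $E\times[0,1]$. Restricting $(S',S'')$ to the two ends produces a homotopy of splittings $(E'_0,C_0)\sim(E'_1,C_1)$, where $C_i=S''|_{X\times\{i\}}$ is some complement of $E'_i$. The complements observation then gives $(E'_0,E''_0)\sim(E'_0,C_0)$ and $(E'_1,C_1)\sim(E'_1,E''_1)$, each pair sharing its first summand, and chaining these three homotopies by transitivity yields $(E'_0,E''_0)\sim(E'_1,E''_1)$.

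The essential content---and the only step beyond routine bookkeeping---is the complements observation, whose point is precisely that the second summand of a splitting is indeterminate only up to the contractible affine space of complements, and so cannot contribute to the homotopy class anything beyond what the first summand already records. Existence of a complement over a paracompact base, the matching of ranks (forced by $E'_0\sim E'_1$), and transitivity of $\sim$ are all standard and need no further comment.
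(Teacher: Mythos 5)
Your proof is correct, but it takes a genuinely different route from the paper's. The paper avoids both your auxiliary observation and any appeal to transitivity by choosing the metrics adapted to the given splittings from the start: it picks $r_i\in\qu{n,0}{E}$ for which $E_i'$ is exactly the $r_i$-orthogonal complement of $E_i$, interpolates $r_t=tr_1+(1-t)r_0$ on $E\times[0,1]$ using Proposition \ref{prop:convex}, and takes the $r$-orthogonal complement of the homotopy $S$; this produces in a single stroke a splitting of $E\times[0,1]$ whose restrictions to the two ends are precisely $(E_0,E_0')$ and $(E_1,E_1')$. You instead take an \emph{arbitrary} complement $S''$ of the homotopy and then repair the mismatch at each end via your complements lemma (the graph interpolation $C_t=\{c+t\phi(c)\}$, i.e.\ the affine structure of the set of complements over $\mathrm{Hom}(E/E',E')$), chaining three homotopies by transitivity of $\sim$ --- which you correctly justify through Proposition \ref{prop:key}, proved earlier and independently, so there is no circularity. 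The two arguments exploit the same underlying fact in different guises: the paper buries the contractibility in the convexity of $\qu{n,0}{\cdot}$, while you surface it as the convexity of the space of complements, which has the expository advantage of explaining \emph{why} the second summand carries no homotopy-theoretic information beyond the first; the paper's version buys brevity and needs no equivalence-relation bookkeeping. One small point worth noting in your favor: the paper's choice of $r_i$ tacitly uses the existence of Euclidean metrics on the summands (paracompactness), exactly parallel to your use of paracompactness of $X\times[0,1]$ to produce $S''$, so neither proof is more demanding on hypotheses.
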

\begin{proof}
We are going to show that for two homotopic subbundles $E_0,E_1\subset E$, any two splittings $(E_i,E_i')$ are homotopic. Indeed, let $S$ be a subbundle of $E\times[0,1]$ with $S |_{E\times\{i\}}=E_i$, $i=0,1$, and let $r_i\in\qu{n,0}{E}$ be such that $E_i'$ is the $r_i$-orthogonal complement of $E_i$. Then the $r$-orthogonal complement of $S$, where $r\in\qu{n,0}{E\times[0,1]}$ is defined by $t\mapsto r_t=tr_1+(1-t)r_0$, establishes a homotopy between the splittings $(E_i,E_i')$.
\end{proof}

Therefore $q_0$ and $q_1$ are in the same $\gaunot{E}$-orbit if and only if one (hence all) of the maximal positive-definite subbundles for $q_0$ are homotopic to one (hence all) of the maximal positive-definite subbundles for $q_1$. This is to be compared with Steenrod's approach (see \cite{Ste51} \S 40). Indeed, if we forget the second summand in a $q$-splitting then we can associate to each $q$ its maximal positive-definite subbundle $E^+_q(r)$, and this assignment determines a homotopy equivalence from $\qu{n_+,n_-}{E}$ to the space of rank $n_+$ vector subbundles of $E$. The choice of $r\in\qu{n,0}{E}$ is implicit in Steenrod.

By Theorem \ref{thm:invariants}, the existence of the $\gaunot{E}$- and $\gau{E}$-actions brings some order in the space $\qu{\nd}{E}$. We have already seen that its path components are precisely the $\gaunot{E}$-orbits. Now we prove the following (see Lemma 4.1 in \cite{Mou01}).

\begin{cor}
Each two path components of $\qu{\nd}{E}$ lying in the same $\gau{E}$-orbit are homeomorphic.
\end{cor}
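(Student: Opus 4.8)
The plan is to exploit the fact that each individual group element acts by a self-homeomorphism of the ambient space, and that self-homeomorphisms permute path components. First I would fix $f\in\gau{E}$ and consider the map $\Phi_f\colon\qu{\nd}{E}\to\qu{\nd}{E}$ given by $q\mapsto fq$. This map is continuous, being the restriction of the jointly continuous action $\gau{E}\times\qu{\nd}{E}\to\qu{\nd}{E}$ to the slice $\{f\}\times\qu{\nd}{E}$; moreover its two-sided inverse is $\Phi_{f^{-1}}$, which is continuous for the same reason (here one only uses $f^{-1}(fq)=q$ and $f(f^{-1}q)=q$, valid for any group action). Hence $\Phi_f$ is a homeomorphism of $\qu{\nd}{E}$ onto itself.

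Next I would invoke the elementary fact that a homeomorphism carries each path component onto a path component. Let $C_0$ and $C_1$ be two path components lying in a common $\gau{E}$-orbit $O$. By the preceding corollary these are $\gaunot{E}$-orbits, but all that is needed here is that $\gau{E}$ acts transitively on $O$: choosing $q_0\in C_0$ and $q_1\in C_1$, there is $f\in\gau{E}$ with $\Phi_f(q_0)=fq_0=q_1$. Since $\Phi_f$ is a homeomorphism, $\Phi_f(C_0)$ is precisely the path component containing $q_1$, namely $C_1$. Therefore the restriction $\Phi_f\colon C_0\to C_1$ is a homeomorphism, which is exactly the assertion.

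The argument is essentially immediate, so I do not expect a genuine obstacle; the single point deserving a word is the claim that a homeomorphism sends path components to path components. This holds because path components are the \emph{maximal} path-connected subsets, and both $\Phi_f$ and $\Phi_{f^{-1}}$ send path-connected sets to path-connected sets, so the image of a maximal path-connected set is again maximal. It is worth remarking that the normality of $\gaunot{E}$ — which underlies the identification of path components with $\gaunot{E}$-orbits in the previous corollary — is not logically required for the present statement, since the conclusion concerns only the topological action of the single element $f$ and the permutation of path components it induces.
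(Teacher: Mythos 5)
Your proof is correct and takes essentially the same approach as the paper's: the paper likewise notes that $f$ acts as a homeomorphism of $\qu{\nd}{E}$, that homeomorphisms send path components to path components bijectively, and then restricts the induced homeomorphism to the path component of $q_0$ to land on that of $q_1$. Your extra details (continuity of $\Phi_f$ and $\Phi_{f^{-1}}$ from the joint continuity of the action, and the maximality argument for path components) simply make explicit what the paper leaves implicit.
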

\begin{proof}
Assume $fq_0=q_1$ for $q_i\in\qu{\nd}{E}$, $i=0,1$, $f\in\gau{E}$. Since $f$ acts as a homeomorphism on $\qu{\nd}{E}$ and homeomorphisms send path components to path components bijectively, we deduce that restricting the homeomorphism induced by $f$ to the path component of $q_0$ gives a homeomorphism onto the path component of $q_1$.
\end{proof}

Another easy consequence of Theorem \ref{thm:invariants} is the following.

\begin{cor}
The cardinality of the set of path components in a given $\gau{E}$-orbit is bounded by the cardinality of the group $\gau{E}/\gaunot{E}$.
\end{cor}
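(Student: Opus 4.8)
The plan is to exploit the fact, recorded in the corollary immediately preceding this one, that the path components of $\qu{\nd}{E}$ are precisely the $\gaunot{E}$-orbits, so that a single $\gau{E}$-orbit $\mathcal{O}$ decomposes into path components each of which is a $\gaunot{E}$-orbit. Fixing such an orbit $\mathcal{O}$, I would let $S$ denote the set of its path components and aim to produce a surjection from $\gau{E}/\gaunot{E}$ onto $S$; the desired bound $\abs{S}\leq\abs{\gau{E}/\gaunot{E}}$ then follows at once, since a surjection of sets never increases cardinality.

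First I would observe that, because $\gaunot{E}$ is a normal subgroup of $\gau{E}$, the group $\gau{E}$ acts on the set of $\gaunot{E}$-orbits: for $f\in\gau{E}$ and a $\gaunot{E}$-orbit $\gaunot{E}\cdot p$, normality gives $f(\gaunot{E}\cdot p)=\gaunot{E}\cdot(fp)$ as a set equality, so $f$ permutes $\gaunot{E}$-orbits while $\gaunot{E}$ itself acts trivially. Hence the action descends to an action of the quotient group $\gau{E}/\gaunot{E}$ on the set of $\gaunot{E}$-orbits, and this action manifestly preserves each $\gau{E}$-orbit. Restricting to the fixed $\mathcal{O}$ I obtain an action of $\gau{E}/\gaunot{E}$ on $S$.

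Next I would check that this action is transitive. Any two path components of $\mathcal{O}$ are $\gaunot{E}$-orbits $\gaunot{E}\cdot p$ and $\gaunot{E}\cdot p'$ with $p,p'\in\mathcal{O}$; since $p$ and $p'$ lie in the same $\gau{E}$-orbit there is $f\in\gau{E}$ with $p'=fp$, and the identity above then gives $\gaunot{E}\cdot p'=\bar{f}\cdot(\gaunot{E}\cdot p)$, where $\bar{f}$ denotes the class of $f$ in $\gau{E}/\gaunot{E}$. With transitivity in hand I choose any base point $s_0\in S$ and consider the orbit map $\gau{E}/\gaunot{E}\to S$, $\bar{f}\mapsto\bar{f}\cdot s_0$, which is surjective precisely by transitivity, yielding $\abs{S}\leq\abs{\gau{E}/\gaunot{E}}$.

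I do not anticipate a genuine obstacle: the statement is a formal orbit-counting argument, and everything rests on the previously established identification of path components with $\gaunot{E}$-orbits together with the normality of $\gaunot{E}$ inside $\gau{E}$. The only point that deserves a moment's care is the verification that the $\gau{E}$-action on $\gaunot{E}$-orbits is well defined and factors through the quotient, but this is an immediate consequence of normality and requires no further input from the topology of $\qu{\nd}{E}$ beyond the cited corollary.
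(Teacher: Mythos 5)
Your proof is correct and follows exactly the route the paper intends (the paper states this corollary without proof, as an ``easy consequence'' of Theorem \ref{thm:invariants} and the preceding corollary identifying path components of $\qu{\nd}{E}$ with $\gaunot{E}$-orbits). Your use of normality of $\gaunot{E}$ to descend to a transitive action of $\gau{E}/\gaunot{E}$ on the set of path components within a fixed $\gau{E}$-orbit, followed by the surjective orbit map, is precisely the formal orbit-counting argument being left to the reader, and it goes through unchanged even though the action here is a right action.
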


The actual cardinality of the set of path components in a given $\gau{E}$-orbit might be investigated in principle by studying the stabiliser of a quadratic form, that is its isometry subgroup in $\gau{E}$. The lack of local compactness of the groups involved does not permit to use the full power of topological group theory though, making the task seemingly more challenging.

\section{Acknowledgments}
I warmly thank Gharchia Abdellaoui, Sergio A. H. Cardona, Pietro Giavedoni, and Antonio Moro for useful discussions and for giving me precious good mood during the making of this paper.

\end{document}